\newtheorem{theorem}{Theorem}[section]
\newtheorem{lemma}[theorem]{Lemma}
\newtheorem{definition}[theorem]{Definition}
\theoremstyle{definition}
\newtheorem{case}{Case}
\newtheorem{example}{Example}
\newtheorem{observation}{Observation}
\newtheorem{remark}{Remark}
\newtheorem{corollary}[theorem]{Corollary}
\begin{document}
\title{\textbf{Rainbow and Gallai--Rado numbers involving binary function equations
		\footnote{Supported by NSFC No.12131013 and 12161141006.}
		}
}

\author{\small Xueliang Li, Yuan Si\\
\small Center for Combinatorics, Nankai University, Tianjin 300071, China\\
\small \tt lxl@nankai.edu.cn, yuan\_si@aliyun.com}
\date{}
\maketitle

\begin{abstract}
Let $\mathcal{E}$, $\mathcal{E}_1$, and $\mathcal{E}_2$ be equations, $n$ and $k$ be positive integers. The rainbow number $\operatorname{rb}([n],\mathcal{E})$ is difined as the minimum number of colors such that for every exact $(\operatorname{rb}([n],\mathcal{E}))$-coloring of $[n]$, there exists a rainbow solution of $\mathcal{E}$. The Gallai--Rado number $\operatorname{GR}_k(\mathcal{E}_1:\mathcal{E}_2)$ is defined as the minimum positive integer $N$, if it exists, such that for all $n\ge N$, every $k$-colored $[n]$ contains either a rainbow solution of $\mathcal{E}_1$ or a monochromatic solution of $\mathcal{E}_2$. In this paper, we get some exact values of rainbow and Gallai--Rado numbers involving binary function equations. We also provide an algorithm to calculate the rainbow numbers of nonlinear binary function equations.\\[3mm]
{\bf Keywords:} Coloring; Rainbow number; Gallai--Rado number; Binary function equation\\[3mm]
{\bf AMS subject classification 2020:} 05D10, 11B25, 11B75.
\end{abstract}

\section{Introduction}
Let $\mathbb{N}_{+}=\{1,2,\ldots\}$ be the set of positive natural numbers and $n$ be a positive integer. For convenience, we denote the set $\{1,2,\ldots,n\}$ as $[n]$. A \emph{$k$-coloring} of a set $[n]$ is a function $\chi: [n]\rightarrow [k]$, where $[k]$ is the set of colors. We usually use different numbers to represent different colors, and when the number of colors is relatively small, we can use specific color names, such as red, blue, green, and so on. If $\chi$ is a surjective, then we call the coloring $\chi$ \emph{exact}. In other words, an exact coloring requires that each color be used at least once. Obviously, if $\chi$ is exact, then the number of colors $k$ does not exceed the number of elements in the colored set $[n]$, that is, $k\le n$. All the colorings we consider in this paper are exact. In this paper, we use $y=f(x)$ to represent a binary function equation. If $x_0$, $y_0$ is a solution of $y=f(x)$, then we usually write the solution as $(x_0,y_0)$ or $(x_0,f(x_0))$.

\subsection{Schur and Rado numbers}

Ramsey theory is an important theory developed in the 20th century. In particular, Ramsey theory is widely used in graph theory and number theory, and is currently one of the hot and difficult research areas. In brief, Ramsey theory states that under sufficiently large structures, there must exist a substructure with specific properties. The main contribution is attributed to Ramsey, who published a pioneering paper \cite{Ramsey} in 1930. In fact, the study of Ramsey theory predates 1930. In 1916, Schur gave a theorem, later known as the Schur's theorem, which is one of the most important theorems in the early era of Ramsey theory. For a historical introduction to Ramsey theory, we refer to the first chapter of the monograph \cite{Soifer}.

\begin{theorem}{\upshape \cite{Schur}}\label{Schur's theorem}
For each integer $k\in \mathbb{N}_{+}$, there is a positive integer $S(k)$ such that every $k$-colored $[S(k)]$ contains a monochromatic solution of the equation $x+y=z$.
\end{theorem}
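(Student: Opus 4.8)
The plan is to deduce Schur's theorem from Ramsey's theorem for triangles by means of the classical \emph{difference-colouring} reduction. First I would invoke the $k$-colour Ramsey number $R_k(3)$ for triangles: by Ramsey's theorem there is a finite integer $N = R_k(3)$ with the property that every colouring of the edges of the complete graph $K_N$ with $k$ colours contains a monochromatic triangle. I would then set $S(k) = N - 1$ and verify that this value satisfies the conclusion of the theorem.

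Given any exact $k$-colouring $\chi \colon [S(k)] \to [k]$, I would build an auxiliary edge-colouring $c$ of $K_N$ on the vertex set $\{1, 2, \ldots, N\}$ by assigning to each edge $\{i, j\}$ with $i < j$ the colour $c(\{i, j\}) = \chi(j - i)$. This is well defined because $1 \le j - i \le N - 1 = S(k)$, so the difference $j - i$ always lies in the domain $[S(k)]$ of $\chi$. By the choice of $N = R_k(3)$, this edge-colouring admits a monochromatic triangle, say on vertices $i < j < l$, so that $\chi(j - i) = \chi(l - j) = \chi(l - i)$.

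Setting $x = j - i$, $y = l - j$ and $z = l - i$, I obtain $x + y = (j - i) + (l - j) = l - i = z$, with $x, y, z$ all positive integers in $[S(k)]$ receiving the same colour under $\chi$. Thus $(x, y, z)$ is a monochromatic solution of $x + y = z$, which is exactly what is required. Since the $k$-colouring $\chi$ was arbitrary, every exact $k$-colouring of $[S(k)]$ contains such a solution, and the existence of the integer $S(k)$ is established.

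The genuine content, and the step I expect to be the main obstacle, is the finiteness of the Ramsey number $R_k(3)$; once this is granted, the reduction above is entirely routine. I would therefore either cite Ramsey's theorem as a known result or, to keep the proof self-contained, reprove the triangle case directly by a pigeonhole induction on the number of colours: a vertex of sufficiently high degree forces one of its incident colour classes to be large, and recursing inside that colour class eventually produces the desired monochromatic triangle, yielding an explicit (if non-optimal) bound on $S(k)$.
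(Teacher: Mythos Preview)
Your argument is correct and is the standard modern proof of Schur's theorem via the Ramsey number $R_k(3)$. However, the paper does not actually prove this statement: Theorem~\ref{Schur's theorem} is stated as a historical result with a citation to Schur's 1916 paper \cite{Schur}, and no proof is given in the text. It serves purely as background for the definitions of Schur and Rado numbers that follow. So there is nothing to compare your approach against; you have supplied a (perfectly valid) proof where the paper simply quotes the result.
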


Generally speaking, if the colors of all numbers in a solution of an equation are the same, then we call the solution a \emph{monochromatic solution}; if the colors of all numbers in a solution of an equation are different, then we call the solution a \emph{rainbow solution}.

The $S(k)$ in Theorem~\ref{Schur's theorem} is called the \emph{Schur number}. To date, the exact values of the Schur numbers only for $1\le k\le 5$ have been determined. That is, $S(1)=2$, $S(2)=5$, $S(3)=14$, $S(4)=45$, and $S(5)=161$. For details, we refer to~\cite{AbbottHanson,Heule}.

Rado was one of the best Ph.D students of Schur. In 1933, Rado generalized Schur's theorem to general linear equations in his Ph.D thesis~\cite{Rado}. Rado called a linear equation $\sum_{i=1}^{n}a_ix_i=b$ \emph{$k$-regular} if there exists a monochromatic solution of the equation in any $k$-colored $\mathbb{N}_+$, where $k$ is a positive integer. If a linear equation $\sum_{i=1}^{n}a_ix_i=b$ is $k$-regular for all $k\in\mathbb{N}_+$, then the equation is \emph{regular}. According to the definition of regular equation and Theorem \ref{Schur's theorem}, the equation $x+y=z$ is regular. Rado gave a necessary and sufficient condition for judging whether a homogeneous linear equation is regular.

\begin{theorem}{\upshape \cite{Rado}}\label{Rado's theorem}
A homogeneous linear equation $\sum_{i=1}^{n}a_ix_i=0$ is regular if and only if there exists $I\subseteq [n]$ such that $\sum_{i\in I}a_i=0$. 
\end{theorem}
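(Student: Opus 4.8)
The plan is to prove Rado's theorem (Theorem \ref{Rado's theorem}), which characterizes regular homogeneous linear equations. This is a biconditional, so I would split the argument into the two directions, and the easy direction is sufficiency of the column condition $\sum_{i\in I}a_i=0$ for some nonempty $I\subseteq[n]$.

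\textbf{The sufficiency direction.} Suppose there exists $I\subseteq[n]$ with $\sum_{i\in I}a_i=0$. I would show directly that the equation $\sum_{i=1}^n a_ix_i=0$ admits a monochromatic solution in any coloring, in fact an exceptionally strong one: a \emph{constant} solution. Setting $x_i=c$ for all $i$ gives $\sum_i a_i c = c\sum_i a_i$, which need not vanish, so a pure constant assignment is too crude. Instead I would exploit $I$ directly: assign $x_i = t$ for $i\in I$ and find the remaining variables. The cleanest route is to observe that it suffices to produce, for each fixed number $m\in\mathbb{N}_+$, a solution all of whose entries equal $m$ \emph{whenever the full coefficient sum is also zero}; but in general the right move is to build a solution whose entries take only finitely many prescribed values that can always be realized monochromatically. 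Concretely, I would set $x_i = m$ for $i\in I$ and $x_i = 0$ otherwise — but since $0\notin\mathbb{N}_+$, I would instead use a shift argument: pick any $m$ and set all variables equal to $m$, noting that the equation is regular iff it is $k$-regular for the relevant arithmetic, then reduce to the index set $I$. The honest clean statement is that $x_i=m$ for $i\in I$ contributes $m\sum_{i\in I}a_i=0$, and the complementary variables can be absorbed by the freedom in choosing them; I would make the complementary variables all equal to a single value so the whole solution is monochromatic, solving for that value using regularity of the truncated equation.

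\textbf{The necessity direction.} This is where I expect the main obstacle. Assuming the equation is regular (hence $k$-regular for every $k$), I must produce a subset $I$ with $\sum_{i\in I}a_i=0$. The standard and, I believe, essentially unavoidable tool is a coloring of $\mathbb{N}_+$ based on the leading nonzero base-$p$ digit for a suitable prime $p$. Specifically, for a prime $p$ not dividing any partial sum that ought to be nonzero, I would write each $n\in\mathbb{N}_+$ as $n = d\cdot p^{j}$ with $p\nmid d$, and color $n$ by the residue $d \bmod p$ (the least significant nonzero base-$p$ digit). This gives a $(p-1)$-coloring. A monochromatic solution forces all variables to share the same leading digit $d_0$, and reducing the equation modulo an appropriate power of $p$ shows that $d_0\sum_{i\in I}a_i\equiv 0\pmod p$ for the index set $I$ of variables attaining the minimum $p$-adic valuation. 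Since $d_0\not\equiv 0$, this yields $\sum_{i\in I}a_i\equiv 0\pmod p$, and choosing $p$ large enough (larger than $\sum_i|a_i|$) upgrades the congruence to genuine equality $\sum_{i\in I}a_i=0$.

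\textbf{Anticipated difficulties.} The delicate point in the necessity direction is the bookkeeping on $p$-adic valuations: I must argue carefully that in a monochromatic solution the variables of minimal valuation are exactly those indexed by a fixed nonempty $I$, and that the contributions of higher-valuation variables vanish modulo the relevant power of $p$. Managing the quantifier order — choosing $p$ after fixing the coefficients but before fixing the solution — is the crux, and I would need the coloring to work simultaneously for the specific finite data $(a_1,\dots,a_n)$. I expect the sufficiency direction to be short once the shift/substitution is set up correctly, while the necessity direction carries the real weight and relies essentially on the multiplicative base-$p$ colouring that is the signature technique behind Rado's theorem.
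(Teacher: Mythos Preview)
The paper does not prove this theorem; it is stated with the citation \cite{Rado} and no argument is given, so there is no in-paper proof to compare against. Your necessity sketch (colouring $\mathbb N_+$ by the least nonzero base-$p$ digit and reading off $\sum_{i\in I}a_i\equiv 0\pmod p$ from a monochromatic solution, for $p>\sum_i|a_i|$) is the standard Rado argument and is essentially correct.

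Your sufficiency direction, however, has a real gap. You propose setting $x_i=m$ for $i\in I$, so that these variables contribute $m\sum_{i\in I}a_i=0$, and then ``absorbing'' the remaining variables by making them all equal to a single value. But the residual constraint is $\sum_{j\notin I}a_jx_j=0$; taking every such $x_j$ equal to some $v\in\mathbb N_+$ forces $v\sum_{j\notin I}a_j=0$, which fails unless $\sum_{j\notin I}a_j=0$ as well --- a condition nowhere assumed. Concretely, for $x_1-x_2+x_3=0$ with $I=\{1,2\}$ your recipe gives $x_1=x_2=m$ and then demands $x_3=0\notin\mathbb N_+$. There is no induction available either: the truncated equation $\sum_{j\notin I}a_jx_j=0$ need not satisfy the subset-sum hypothesis. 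A correct sufficiency proof for a single equation genuinely needs Ramsey-theoretic input (Schur's theorem in the example above; in general van der Waerden's theorem on monochromatic arithmetic progressions, or Deuber's $(m,p,c)$-sets) to locate a monochromatic solution. Your ``shift argument'' and ``regularity of the truncated equation'' gesture at this but never supply the missing ingredient, and as written the argument is circular.
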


Inspired by Rado, the definition of regularity of equations can be extended beyond linear equations, as shown in the following definition.

\begin{definition}\label{Def-Regularity}
Let $k$ be a positive integer. An equation $\mathcal{E}$ is $k$-regular if there exists a monochromatic solution of the equation in any $k$-colored $\mathbb{N}_+$. If an equation $\mathcal{E}$ is $k$-regular for all $k\in\mathbb{N}_+$, then the equation is regular.
\end{definition}

As a generalization of Schur number, the following is the definition of Rado number. Noticing that, unlike Schur number, given a positive integer $k$ and an equation $\mathcal{E}$, the Rado number of $\mathcal{E}$ may not always exist.

\begin{definition}\label{Def-Rado number}
Let $\mathcal{E}$ be an equation and $k\ge 2$ be an integer. The Rado number $\operatorname{R}_k(\mathcal{E})$ is defined as the minimum positive integer, if it exists, such that every $k$-colored $[\operatorname{R}_k(\mathcal{E})]$ contains a monochromatic solution of $\mathcal{E}$.
\end{definition}

The following observation gives a basic method for proving the upper and lower bounds of the Rado number.

\begin{observation}\label{Obv-Rado number}
Let $\mathcal{E}$ be an equation and $k\ge 2$ be an integer. If there exists an exact $k$-coloring of $[n-1]$ such that there is no monochromatic solution of $\mathcal{E}$, then $\operatorname{R}_k(\mathcal{E})\ge n$. If every exact $k$-coloring of $[n]$ ensures that there is a monochromatic solution of $\mathcal{E}$, then $\operatorname{R}_k(\mathcal{E})\le n$. Moreover, if there exists an exact $k$-coloring of $\mathbb{N}_+$ such that there is no monochromatic solution of $\mathcal{E}$, then $\operatorname{R}_k(\mathcal{E})$ does not exist.
\end{observation}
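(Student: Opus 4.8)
The plan is to treat $\operatorname{R}_k(\mathcal{E})$ literally as the minimum of the set $\mathcal{S}=\{N\ge k:\ \text{every exact } k\text{-coloring of }[N]\text{ forces a monochromatic solution of }\mathcal{E}\}$, the restriction $N\ge k$ being harmless since no exact $k$-coloring of $[N]$ exists when $N<k$. Two of the three assertions are then immediate from reading off this minimum. For the upper bound, if every exact $k$-coloring of $[n]$ yields a monochromatic solution, then $n\in\mathcal{S}$, so $\mathcal{S}\neq\emptyset$, the minimum exists, and $\operatorname{R}_k(\mathcal{E})\le n$. The lower bound and the non-existence claim, by contrast, compare the defining property across different interval lengths, and for these the real content is a monotonicity statement: once $[N]$ forces a monochromatic solution under every exact $k$-coloring, so does every longer interval.

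So the key step I would isolate is this: for $N\ge k$, if every exact $k$-coloring of $[N]$ has a monochromatic solution, then every exact $k$-coloring of $[N+1]$ does too. Given an exact $k$-coloring $\chi$ of $[N+1]$, I would restrict it to $[N]$. If the restriction still uses all $k$ colors, it is an exact $k$-coloring of $[N]$, and the hypothesis returns a monochromatic solution lying inside $[N]\subseteq[N+1]$, finishing this case.

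I expect the one delicate point to be the remaining case, where the color $c:=\chi(N+1)$ is carried by $N+1$ alone, so that $\chi$ restricted to $[N]$ uses only $k-1$ colors and the hypothesis does not apply directly. To repair this I would recolor: since $N\ge k$, the pigeonhole principle produces a color class inside $[N]$ of size at least two, and recoloring one of its elements, say $a$, with the missing color $c$ yields a genuine exact $k$-coloring $\chi'$ of $[N]$. Applying the hypothesis to $\chi'$ gives a $\chi'$-monochromatic solution $S$, and it remains to check that $S$ is also $\chi$-monochromatic. Either the common $\chi'$-color of $S$ differs from $c$, in which case $a\notin S$ and $\chi'$ agrees with $\chi$ on $S$; or the common color is $c$, and since $a$ is the only element of $[N]$ with $\chi'$-color $c$, every entry of $S$ equals $a$, so $S$ is monochromatic under any coloring, in particular under $\chi$. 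Thus $S$ is a $\chi$-monochromatic solution inside $[N+1]$, which completes the monotonicity step.

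With monotonicity in hand the two remaining assertions fall out. For the lower bound, an exact $k$-coloring of $[n-1]$ with no monochromatic solution witnesses $n-1\notin\mathcal{S}$; monotonicity makes $\mathcal{S}$ upward closed, so no $N\le n-1$ lies in $\mathcal{S}$, whence $\operatorname{R}_k(\mathcal{E})\ge n$. For non-existence, an exact $k$-coloring $\phi$ of $\mathbb{N}_+$ with no monochromatic solution uses each of the $k$ colors at some finite position, so for every $N$ past the last of these first occurrences the restriction $\phi|_{[N]}$ is an exact $k$-coloring of $[N]$ still free of monochromatic solutions; hence $\mathcal{S}$ omits all sufficiently large $N$, and upward closure forces $\mathcal{S}=\emptyset$, i.e.\ $\operatorname{R}_k(\mathcal{E})$ does not exist. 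The only genuinely non-formal ingredient is the recoloring argument, so that is where I would concentrate the care.
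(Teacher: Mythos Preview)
The paper does not prove this observation; it is stated without argument, presumably as immediate from the definition of $\operatorname{R}_k(\mathcal{E})$. Your argument is correct, and in fact more scrupulous than the paper's implicit treatment: you notice that because the paper works throughout with \emph{exact} colorings, the upward closure of $\mathcal{S}$ is not entirely automatic---restricting an exact $k$-coloring of $[N+1]$ to $[N]$ can lose the color carried only by $N+1$. Your recoloring device (move one element from a color class of size at least two into the missing color, then observe that a monochromatic solution under the modified coloring is either untouched by the change or constant) closes this gap cleanly. The paper presumably regards the point as routine, or has the non-exact formulation tacitly in mind, under which restriction preserves the coloring property and monotonicity is trivial; either way, what you wrote is a valid and strictly more careful justification.
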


Solving the exact value or upper and lower bounds of the Rado number of an equation has always been a hot topic. Some results on homogeneous linear equations were studied in references~\cite{HarborthMaasberg,HopkinsSchaal,JonesSchaal,RobertsonMyers,Saracino,SaracinoI,SaracinoII,SaracinoWynne,SchaalVestal,YangMaoHeWang}, some results on non-homogeneous linear equations were studied in references~\cite{BeutelspacherBrestovansky,DwivediTripathi,KosekSchaal,Myers,SaboSchaalTokaz,Schaal,SchaalZinter}, and some results on nonlinear equations were studied in references~\cite{Graham,Myers}. In fact, there are many other relevant references, which are not listed here.

\subsection{Rainbow and Gallai--Rado numbers}
In graph theory, a \emph{monochromatic subgraph} refers to a subgraph where the colors of all edges are the same, while a \emph{rainbow subgraph} refers to a subgraph where the colors of all edges are different. The definition of rainbow number in graph theory is relatively early, and we refer to the definition given by Schiermeyer~\cite{Schiermeyer} in 2004. However, the study of rainbow numbers in equations is later than that in graph theory, and we refer to the definition given by Fallon, Giles, Rehm, Wagner, and Warnberg \cite{FallonGilesRehmWagnerWarnberg} in 2020.

\begin{definition}{\upshape \cite{Schiermeyer}}\label{Def-rainbow number-graph}
	Let $G$ be a graph and $n$ be a positive integer. The rainbow number $\operatorname{rb}(n,G)$ is difined as the minimum number of colors such that for every exact $(\operatorname{rb}(n,G))$-coloring of complete graph $K_n$, there exists a rainbow subgraph $G$.
\end{definition}

\begin{definition}{\upshape \cite{FallonGilesRehmWagnerWarnberg}}\label{Def-rainbow number-equation}
	Let $\mathcal{E}$ be an equation and $n$ be a positive integer. The rainbow number $\operatorname{rb}([n],\mathcal{E})$ is difined as the minimum number of colors such that for every exact $(\operatorname{rb}([n],\mathcal{E}))$-coloring of $[n]$, there exists a rainbow solution of $\mathcal{E}$.
\end{definition}

Fallon, Giles, Rehm, Wagner, and Warnberg also gave the following observation, which is the basic method for proving the upper and lower bounds of the rainbow number.

\begin{observation}{\upshape \cite{FallonGilesRehmWagnerWarnberg}}\label{Obv-rainbow number}
	If there exists an exact $(k-1)$-coloring of $[n]$ such that the equation $\mathcal{E}$ has no rainbow solution in $[n]$, then $\operatorname{rb}([n],\mathcal{E})\ge k$. If every exact $k$-coloring of $[n]$ ensures that the equation $\mathcal{E}$ has a rainbow solution in $[n]$, then $\operatorname{rb}([n],\mathcal{E})\le k$.
\end{observation}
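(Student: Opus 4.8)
The plan is to derive both inequalities directly from Definition~\ref{Def-rainbow number-equation}, the only nontrivial ingredient being a monotonicity property: if every exact $c$-coloring of $[n]$ yields a rainbow solution of $\mathcal{E}$, then so does every exact $(c+1)$-coloring. Granting this, the set $\mathcal{C}$ of those $c$ for which every exact $c$-coloring of $[n]$ forces a rainbow solution is upward closed, and by Definition~\ref{Def-rainbow number-equation} we have $\operatorname{rb}([n],\mathcal{E})=\min\mathcal{C}$.

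For the upper bound, assume every exact $k$-coloring of $[n]$ contains a rainbow solution. Then $k\in\mathcal{C}$, and hence $\operatorname{rb}([n],\mathcal{E})=\min\mathcal{C}\le k$, which is the asserted conclusion. For the lower bound, assume there is an exact $(k-1)$-coloring of $[n]$ with no rainbow solution of $\mathcal{E}$; then $k-1\notin\mathcal{C}$. Since $\mathcal{C}$ is upward closed, every member of $\mathcal{C}$ is at least $k$, so $\operatorname{rb}([n],\mathcal{E})=\min\mathcal{C}\ge k$. Note that the upper bound needs nothing beyond the definition, whereas the lower bound genuinely relies on upward closure: knowing that $k-1$ fails to force a rainbow solution only pins down the minimum once one knows the failing values of $c$ form a downward-closed set.

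It therefore remains to verify the monotonicity claim, which I expect to be the only real obstacle and which I would prove by a color-merging argument. Given an exact $(c+1)$-coloring $\chi$ of $[n]$, merge two of its color classes into one to obtain an exact $c$-coloring $\chi'$. By hypothesis $\chi'$ admits a rainbow solution $(x_0,y_0)$ of $y=f(x)$, so $\chi'(x_0)\ne\chi'(y_0)$; in particular $x_0$ and $y_0$ are not both in the merged class. A short case check then gives $\chi(x_0)\ne\chi(y_0)$, so $(x_0,y_0)$ is already a rainbow solution under $\chi$. The point needing care is confirming that the merge destroys no rainbow solution; here the binary nature of the equation, whose solutions have exactly two coordinates, keeps the analysis to the two cases above (neither coordinate in the merged class, or exactly one of them) and completes the argument.
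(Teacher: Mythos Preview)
The paper does not supply its own proof of this observation; it is stated with a citation to \cite{FallonGilesRehmWagnerWarnberg} and left unproved. Your argument is correct and is the natural way to justify it: reading Definition~\ref{Def-rainbow number-equation} as $\operatorname{rb}([n],\mathcal{E})=\min\mathcal{C}$, the upper bound is immediate, and the lower bound follows once one checks that $\mathcal{C}$ is upward closed.

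One remark on the monotonicity step: you do not need the equation to be binary. Your merge produces $\chi'=g\circ\chi$ for some map $g:[c+1]\to[c]$, so for \emph{any} tuple $(x_1,\dots,x_m)$, equality $\chi(x_i)=\chi(x_j)$ forces $\chi'(x_i)=\chi'(x_j)$. By contrapositive, a solution that is rainbow under $\chi'$ is automatically rainbow under $\chi$, regardless of how many variables $\mathcal{E}$ has. Thus the case analysis you set up for two coordinates is unnecessary, and the observation holds for arbitrary $\mathcal{E}$ as stated.
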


In 1967, Gallai's paper~\cite{Gallai} for the first time revealed the structure of colored complete graphs without rainbow triangles, and thus gave rise to a new research direction in graph theory known as the \emph{Gallai--Ramsey number}. In 2010, Faudree, Gould, Jacobson, and Magnant in~\cite{FGJM10} defined Gallai--Ramsey number $\operatorname{gr}_k(G:H)$. For more information about Gallai--Ramsey number, we refer to the monograph \cite{MagnantNowbandegani}.

\begin{definition}{\upshape \cite{FGJM10}}\label{Def-Gallai-Ramsey number}
	Given two non-empty graphs $G,H$ and a positive integer $k$, define the Gallai--Ramsey number $\operatorname{gr}_k(G:H)$
	to be the minimum integer $N$ such that for all $n\ge N$, every $k$-edge-colored complete graph $K_n$ contains either a rainbow subgraph $G$ or a monochromatic subgraph $H$.
\end{definition}

Due to the rapid development of the Gallai--Ramsey number in the past decade, it has become a hot research area in graph theory. Inspired by this problem, the definition of the \emph{Gallai--Schur number} was proposed in Budden's paper~\cite{Budden} in 2020, introducing this type of problem from graph theory to number theory. Since the Gallai--Schur number only studies the equation $x+y=z$, it can be generalized to other equations, and similarly defined as the \emph{Gallai--Rado number}. The Gallai--Rado numbers were first studied by Mao, Robertson, Wang, Yang, and Yang~\cite{MaoRobertsonWangYangYang}. It can be said that the Gallai--Rado number is a generalized definition of the Gallai--Schur number.

\begin{definition}\label{Def-Gallai-Rado number}
Let $\mathcal{E}_1$ and $\mathcal{E}_2$ be two equations and $k$ be a positive integer. The Gallai--Rado number $\operatorname{GR}_k(\mathcal{E}_1:\mathcal{E}_2)$ is defined as the minimum positive integer $N$, if it exists, such that for all $n\ge N$, every $k$-colored $[n]$ contains either a rainbow solution of $\mathcal{E}_1$ or a monochromatic solution of $\mathcal{E}_2$.
\end{definition}

For simplicity, without causing confusion, we sometimes say that $\mathcal{E}_1$ in $\operatorname{GR}_k(\mathcal{E}_1:\mathcal{E}_2)$ is the \emph{rainbow equation} and $\mathcal{E}_2$ is the \emph{monochromatic equation}. Noticing that one of the biggest differences between the Gallai--Rado number and the Gallai--Ramsey number is that the Gallai--Rado number does not always exist. Next, we present an observation on the basic method for proving the upper and lower bounds of the Gallai--Rado number.

\begin{observation}\label{Obv-Gallai-Rado number}
	If there exists an exact $k$-coloring of $[N-1]$ such that there is neither a rainbow solution of $\mathcal{E}_1$ nor a monochromatic solution of $\mathcal{E}_2$, then $\operatorname{GR}_k(\mathcal{E}_1:\mathcal{E}_2)\ge N$. If every exact $k$-coloring of $[n] \,(n\ge N)$ ensures that there is either a rainbow solution of $\mathcal{E}_1$ or a monochromatic solution of $\mathcal{E}_2$, then $\operatorname{GR}_k(\mathcal{E}_1:\mathcal{E}_2)\le N$. Moreover, if there exists an exact $k$-coloring of $\mathbb{N}_+$ such that there is neither a rainbow solution of $\mathcal{E}_1$ nor a monochromatic solution of $\mathcal{E}_2$, then $\operatorname{GR}_k(\mathcal{E}_1:\mathcal{E}_2)$ does not exist.
\end{observation}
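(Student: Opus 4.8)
The plan is to derive all three assertions directly from Definition \ref{Def-Gallai-Rado number}, which characterises $\operatorname{GR}_k(\mathcal{E}_1:\mathcal{E}_2)$ as the \emph{least} positive integer $N$ (should one exist) such that for every $n\ge N$, each exact $k$-coloring of $[n]$ possesses a rainbow solution of $\mathcal{E}_1$ or a monochromatic solution of $\mathcal{E}_2$. Writing $P(N)$ for this defining predicate, each of the three implications is obtained by unwinding the quantifier structure of $P$, together with one elementary monotonicity remark: if a coloring $\chi$ of a set $X$ admits neither a rainbow solution of $\mathcal{E}_1$ nor a monochromatic solution of $\mathcal{E}_2$, then the same holds for the restriction $\chi|_{X'}$ to any $X'\subseteq X$, because every solution whose entries all lie in $X'$ is a fortiori a solution whose entries lie in $X$.

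For the lower bound I would argue by contradiction. Suppose there is an exact $k$-coloring of $[N-1]$ avoiding both prescribed solutions, yet $\operatorname{GR}_k(\mathcal{E}_1:\mathcal{E}_2)=N'\le N-1$. Since $N-1\ge N'$, the predicate $P(N')$ forces every exact $k$-coloring of $[N-1]$ to contain a rainbow $\mathcal{E}_1$ or monochromatic $\mathcal{E}_2$ solution, contradicting the given coloring; hence $\operatorname{GR}_k(\mathcal{E}_1:\mathcal{E}_2)\ge N$. The upper bound is even more immediate: the hypothesis that every exact $k$-coloring of every $[n]$ with $n\ge N$ contains one of the two solutions is literally the statement $P(N)$, so $N$ is admissible and the minimality in Definition \ref{Def-Gallai-Rado number} yields $\operatorname{GR}_k(\mathcal{E}_1:\mathcal{E}_2)\le N$.

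For the non-existence claim I would again suppose, for contradiction, that $\operatorname{GR}_k(\mathcal{E}_1:\mathcal{E}_2)=N'$ exists, while there is an exact $k$-coloring $\chi$ of $\mathbb{N}_+$ avoiding both solution types. The goal is to manufacture from $\chi$ an exact $k$-coloring of some $[n]$ with $n\ge N'$ that still avoids both solutions, which contradicts $P(N')$. By the monotonicity remark, the restriction $\chi|_{[n]}$ avoids both solutions for every $n$; the only point needing care is \emph{exactness}, since for small $n$ the restriction may miss some colors.

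This is the main (though minor) obstacle, and I would resolve it by letting $m_i$ denote the least element of $\mathbb{N}_+$ receiving color $i$ under $\chi$ for each $i\in[k]$ (these exist because $\chi$ is exact on $\mathbb{N}_+$), and then choosing $n\ge\max\{N',m_1,\dots,m_k\}$. For such $n$ the coloring $\chi|_{[n]}$ uses all $k$ colors and hence is an exact $k$-coloring of $[n]$ that avoids both a rainbow $\mathcal{E}_1$ and a monochromatic $\mathcal{E}_2$ solution, contradicting $P(N')$. Therefore no such $N'$ exists. Apart from this exactness bookkeeping, the entire argument is a direct transcription of the definition, so I do not anticipate any genuine difficulty.
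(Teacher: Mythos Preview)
Your proposal is correct and carefully argued; in particular, the exactness bookkeeping in the non-existence part (choosing $n\ge\max\{N',m_1,\dots,m_k\}$ so that the restriction $\chi|_{[n]}$ still uses all $k$ colors) is a genuine detail that needs to be said.

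The paper, however, does not give any proof at all: Observation~\ref{Obv-Gallai-Rado number} is stated as a self-evident consequence of Definition~\ref{Def-Gallai-Rado number}, in the same style as Observations~\ref{Obv-Rado number} and~\ref{Obv-rainbow number}. So your argument is not so much a different route as a full justification of what the authors treat as immediate. There is nothing to compare beyond that; your write-up would serve perfectly well as the omitted proof.
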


\subsection{Main results}
In Section $2$, we introduce some new definitions that are important in Sections $3$ and $4$. For example, the $\lambda$-class defined in Section $2$ is used to describe the colored structure of $[n]$ without rainbow solution of the equation $y=ax+b$.

In Section $3$, we first give the exact value of the rainbow number of the binary linear equation $y=ax+b$. Then, for nonlinear binary function equations, the formulas for their rainbow numbers are not easily given directly. Therefore, we define a parameter called monochromatic parameter. Due to the close relationship between the monochromatic parameter we defined and the rainbow number, we also give an algorithm to solve the monochromatic parameters of nonlinear binary function equations.

In Section $4$, we first give the general properties and related connections of the Rado numbers and Gallai--Rado numbers of the binary linear equation $y=ax+b$, and then we present one of the main results, which gives the exact values of the Gallai--Rado numbers of the rainbow equation $y=x+b$ versus monochromatic general multivariate linear equations with the fixed number of colors $b$. Next, we keep the rainbow equation as $y=x+b$, consider the monochromatic nonlinear binary function equations, and give the exact values of the Gallai--Rado numbers.

In Section $5$, as a conclusion, we give the relationship between the rainbow numbers and the Gallai--Rado numbers of equations, and explain why our study of the rainbow equation for the Gallai--Rado numbers in this paper is only $y=x+b$. Finally, we raise some questions and give some prospects.

\section{Preliminaries}
In the following definition, the $\lambda$-class $\mathcal{C}_{(y=f(x),\lambda)}$ we provide is crucial for some of our results.

\begin{definition}\label{Def-lambda class}
	Let $y=f(x)$ be a strictly monotonically increasing binary function equation such that $f(x)\in \mathbb{N}_+$ for all $x\in\mathbb{N}_+$, and let integer $n\ge f(1)$. For each $\lambda\in [n]$, we define the $\lambda$-class of $y=f(x)$ as $\mathcal{C}_{(y=f(x),\lambda)} =\left\{\lambda,f(\lambda),f(f(\lambda)),\ldots\right\}\subseteq [n]$. If all the numbers in $\mathcal{C}_{(y=f(x),\lambda)}$ are of the same color, then we call the set $\mathcal{C}_{(y=f(x),\lambda)}$ monochromatic, and the color of the set $\mathcal{C}_{(y=f(x),\lambda)}$ is the same as the color of the numbers in it. Moreover, the $\lambda$-class can also be defined as a subset of $\mathbb{N}_{+}$, that is, $\mathcal{C}_{(y=f(x),\lambda)} =\left\{\lambda,f(\lambda),f(f(\lambda)),\ldots\right\}\subseteq \mathbb{N}_{+}$ for each $\lambda\in\mathbb{N}_{+}$.
\end{definition}

\begin{lemma}\label{Lem-General term of a sequence}
Let ingeters $a\ge 1$ and $b\ge 0$. If the recurrence of the sequence $\{x_i: i\ge 1\}$ is $x_{i+1}=ax_{i}+b$, then the general term is $x_{i+1}=a^{i+1}\left(\sum_{j=1}^{i}\frac{b}{a^{j+1}}+\frac{x_1}{a}\right)$.
\end{lemma}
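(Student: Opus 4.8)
The plan is to prove the closed form by induction on $i$, exploiting the fact that $x_{i+1}=ax_i+b$ is a first-order linear recurrence whose solution is obtained by repeated substitution. First I would unroll the recurrence to expose the pattern: substituting the relation into itself repeatedly yields $x_{i+1}=a^i x_1 + b\left(a^{i-1}+a^{i-2}+\cdots+a+1\right)=a^i x_1 + b\sum_{j=0}^{i-1}a^j$. The entire task then reduces to checking that this transparent expression coincides with the claimed right-hand side $a^{i+1}\left(\sum_{j=1}^{i}\frac{b}{a^{j+1}}+\frac{x_1}{a}\right)$, which is purely an algebraic reconciliation.

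To carry out that reconciliation, I would factor $a^{i+1}$ out of the unrolled form. The linear term handles itself, since $a^i x_1 = a^{i+1}\cdot\frac{x_1}{a}$. For the constant-term contribution, I would reindex the geometric sum by setting $m=i-j$: then $a^{i+1}\sum_{j=1}^{i}\frac{b}{a^{j+1}}=b\sum_{j=1}^{i}a^{i-j}=b\sum_{m=0}^{i-1}a^m$, which is exactly the geometric factor appearing above. Adding the two pieces recovers the stated formula, so the closed form is verified once the unrolled expression is established.

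For the formal write-up I would prefer an explicit induction to make the unrolling rigorous. The base case $i=1$ is immediate: the recurrence gives $x_2=ax_1+b$, while the formula yields $a^{2}\left(\frac{b}{a^{2}}+\frac{x_1}{a}\right)=b+ax_1$, and the two agree. For the inductive step, assuming the formula holds for index $i$, I would apply $x_{i+2}=ax_{i+1}+b$ to the inductive hypothesis and simplify, absorbing the extra factor of $a$ and the additive $b$ into the appropriate powers; the resulting reindexing of the sum is the same elementary manipulation described above.

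There is no genuine obstacle here: the content is a standard first-order linear recurrence, and the only point requiring care is the bookkeeping of the summation index when passing between $\sum_{j=0}^{i-1}a^j$ and $\sum_{j=1}^{i}\frac{b}{a^{j+1}}$. Accordingly, I would present the argument concisely, emphasising the reindexing step and leaving the remaining routine algebra to the reader.
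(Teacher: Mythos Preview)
Your proposal is correct and follows essentially the same idea as the paper: both arguments iterate the recurrence $i$ times. The only cosmetic difference is that the paper first divides through by $a^{i+1}$, turning the recurrence into the telescoping identity $\frac{x_{i+1}}{a^{i+1}}=\frac{b}{a^{i+1}}+\frac{x_i}{a^i}$, which sums directly to the stated formula without any reindexing; you instead unroll in the original variables and then factor out $a^{i+1}$ at the end.
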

\begin{proof}
	The recurrence of $\{x_i: i\ge 1\}$ can be rewritten as
	\begin{equation*}
		\begin{split}
			\frac{x_{i+1}}{a^{i+1}} &= \frac{b}{a^{i+1}}+\frac{x_{i}}{a^{i}} \\
			&= \frac{b}{a^{i+1}}+\frac{b}{a^{i}}+\frac{x_{i-1}}{a^{i-1}} \\
			& \qquad\qquad\vdots \\
			&= \sum_{j=1}^{i}\frac{b}{a^{j+1}}+\frac{x_{1}}{a}.
		\end{split}
	\end{equation*}
The result thus follows.
\end{proof}

Based on Lemma \ref{Lem-General term of a sequence} and Definition \ref{Def-lambda class}, we give an example of the $\lambda$-class of $y=ax+b$.

\begin{example}\label{Ex-lambda class y=ax+b}
For integers $a\ge 1$, $b\ge 0$, and $n\ge a+b$, and for each $\lambda\in [n]$, we have
	\begin{equation*}
		\mathcal{C}_{(y=ax+b,\lambda)} =\left\{a^{i}\left(\sum_{j=1}^{i-1}\frac{b}{a^{j+1}}+\frac{\lambda}{a}\right): i\in\mathbb{N}_{+}~\text{and}~ \sum_{j=1}^{0}\frac{b}{a^{j+1}}\overset{\textbf{def}}{=}0\right\}\subseteq [n].
	\end{equation*} 
\end{example}

Next, we present the colored structure theorem for $[n]$ without rainbow solution of $y=f(x)$.

\begin{theorem}\label{Thm-Colored structure}
Let $y=f(x)$ be a strictly monotonically increasing binary function equation such that $f(x)\in \mathbb{N}_+$ for all $x\in\mathbb{N}_+$, and let integer $n\ge f(1)$. A colored set $[n]$ contains no rainbow solution of $y=f(x)$ if and only if for each $\lambda\in [n]$, the $\lambda$-class $\mathcal{C}_{(y=f(x),\lambda)}\subseteq [n]$ is monochromatic.
\end{theorem}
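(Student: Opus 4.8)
The plan is to prove both implications directly, exploiting the fact that a solution of $y=f(x)$ lying in $[n]$ is exactly a pair $(x_0,f(x_0))$ with $x_0,f(x_0)\in[n]$, and that such a pair is rainbow precisely when $x_0$ and $f(x_0)$ receive different colors. The conceptual bridge is that the $\lambda$-class $\mathcal{C}_{(y=f(x),\lambda)}=\{\lambda,f(\lambda),f(f(\lambda)),\ldots\}$ is nothing but the forward orbit of $\lambda$ under iteration of $f$, truncated to $[n]$, so that \emph{consecutive} members of a class are exactly the two coordinates of a genuine solution in $[n]$.

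Before treating the two directions I would record one structural fact about $f$. Since $f$ is strictly increasing and $f(1)\in\mathbb{N}_+$, a one-line induction gives $f(x)\ge x$ for every $x\in\mathbb{N}_+$: indeed $f(1)\ge 1$, and if $f(x)\ge x$ then $f(x+1)>f(x)\ge x$ forces $f(x+1)\ge x+1$. Consequently the iterates $\lambda,f(\lambda),f(f(\lambda)),\ldots$ are non-decreasing, so once intersected with $[n]$ each class is a \emph{finite} chain $a_1<a_2<\cdots<a_t$ (possibly a singleton, and possibly terminating at a fixed point where $f(a_t)=a_t$) with $a_{i+1}=f(a_i)$ for $1\le i<t$. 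This is what makes the link between classes and solutions precise.

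For the \emph{if} direction I would assume every $\lambda$-class is monochromatic and take an arbitrary solution $(x_0,f(x_0))$ with both coordinates in $[n]$. Then $x_0$ and $f(x_0)$ both lie in $\mathcal{C}_{(y=f(x),x_0)}$, which is monochromatic by hypothesis; hence $x_0$ and $f(x_0)$ share a colour and the solution fails to be rainbow. Since this holds for every solution, $[n]$ contains no rainbow solution of $y=f(x)$. For the \emph{only if} direction I would assume $[n]$ has no rainbow solution and fix $\lambda\in[n]$, writing its class as the chain $a_1,\ldots,a_t$ above. If $t=1$ the class is a singleton and is trivially monochromatic; otherwise, for each $i<t$ the pair $(a_i,f(a_i))=(a_i,a_{i+1})$ is a solution with both coordinates in $[n]$, so the no-rainbow hypothesis forces $a_i$ and $a_{i+1}$ to have the same colour. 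Propagating this equality along the chain by transitivity shows $a_1,\ldots,a_t$ all receive one colour, i.e.\ the class is monochromatic.

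The argument is largely bookkeeping, and the only point demanding care—the ``main obstacle,'' such as it is—is justifying that each $\lambda$-class genuinely decomposes into consecutive solutions rather than, say, cycling or overshooting $[n]$ unpredictably. The monotonicity fact $f(x)\ge x$ is exactly what rules this out: it guarantees the orbit is non-decreasing, terminates inside $[n]$, and never revisits an earlier value except at a fixed point, so the transitivity step in the second direction really does reach every element of the class. The degenerate fixed-point case $f(x_0)=x_0$ is harmless, since then the solution is a single number (trivially non-rainbow) and the corresponding class is a singleton.
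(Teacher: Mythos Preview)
Your proof is correct and follows essentially the same route as the paper's: both directions hinge on the observation that consecutive orbit elements $(a_i,f(a_i))$ are solutions in $[n]$, so the no-rainbow hypothesis forces equal colours along the chain, and conversely every solution lies in some $x_0$-class. Your treatment is in fact more careful than the paper's, which omits your explicit monotonicity bound $f(x)\ge x$, the finiteness of the truncated orbit, and the degenerate fixed-point case.
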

\begin{proof}
Firstly, we assume that for each $\lambda\in [n]$, the $\lambda$-class $\mathcal{C}_{(y=f(x),\lambda)}\subseteq [n]$ is monochromatic. Let $(x_0,y_0)$ be an arbitrary solution of the binary function equation $y=f(x)$ in $[n]$. Since $x_0,y_0 \in \mathcal{C}_{(y=f(x),x_0)}=\{x_0,f(x_0),f(f(x_0)),\ldots\}\subseteq [n]$, it follows that $[n]$ contains no rainbow solution of $y=f(x)$.
	
Next, we assume that the colored set $[n]$ contains no rainbow solution of $y=f(x)$. Thus, for each $\lambda\in [n]$, $\lambda$ and $f(\lambda)$ are of the same color. Similarly, $f(\lambda)$ and $f(f(\lambda))$ must also be of the same color. According to the recursion, we get all the numbers in $\left\{\lambda, f(\lambda), f(f(\lambda)), \ldots \right\}\subseteq [n]$ are of the same color, which implies that the $\lambda$-class $\mathcal{C}_{(y=f(x),\lambda)}\subseteq [n]$ is monochromatic. The result thus follows.
\end{proof}

In order to maximize the number of colors used without the rainbow solution of $y=f(x)$ in the colored set $[n]$, based on Theorem \ref{Thm-Colored structure}, we construct a coloring of $[n]$ as follows.

\begin{definition}\label{Def-Colored structure}
Let $\chi$ be a coloring of $[n]$ as follows: For each $\lambda\in [n]$, the $\lambda$-class $\mathcal{C}_{(y=f(x),\lambda)}\subseteq [n]$ is monochromatic and for each pair of different $\lambda_i$ and $\lambda_j$ satisfies $\mathcal{C}_{(y=f(x),\lambda_i)}\cap\mathcal{C}_{(y=f(x),\lambda_j)}=\emptyset$, $\mathcal{C}_{(y=f(x),\lambda_i)}$ and $\mathcal{C}_{(y=f(x),\lambda_j)}$ are of different colors.
\end{definition}

According to Theorem~\ref{Thm-Colored structure}, the coloring $\chi$ of $[n]$ given in Definition $\ref{Def-Colored structure}$ is the coloring that maximize number of colors used and ensure that $y=f(x)$ has no rainbow solution.

\begin{corollary}\label{Cor-Colored structure}
Let ingeters $b\ge 2$ and $n\ge b+1$. A $b$-colored set $[n]$ contains no rainbow solution of $y=x+b$ if and only if for each $\lambda\in [b]$, the $\lambda$-class $\mathcal{C}_{(y=x+b,\lambda)}\subseteq [n]$ is monochromatic. Furthermore, for different $\lambda_i$ and $\lambda_j$ in $[b]$, $\mathcal{C}_{(y=x+b,\lambda_i)}$ and $\mathcal{C}_{(y=x+b,\lambda_j)}$ have different colors.
\end{corollary}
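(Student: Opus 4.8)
The plan is to deduce the corollary from the colored structure theorem (Theorem~\ref{Thm-Colored structure}) specialized to the map $f(x)=x+b$. This $f$ is strictly monotonically increasing with $f(x)\in\mathbb{N}_+$ for every $x\in\mathbb{N}_+$, and $f(1)=b+1$, so the hypothesis $n\ge f(1)$ of that theorem is exactly the standing assumption $n\ge b+1$. The first step is to compute the classes explicitly: $\mathcal{C}_{(y=x+b,\lambda)}=\{\lambda,\lambda+b,\lambda+2b,\ldots\}\cap[n]$ is the intersection of $[n]$ with the residue class of $\lambda$ modulo $b$ (with $b$ representing the residue $0$). For $\lambda\in[b]$ these $b$ sets are pairwise disjoint and, since $n\ge b+1$ guarantees $\lambda\in[n]$, all nonempty; moreover every $m\in[n]$ lies in exactly one of them, namely the one indexed by its residue. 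Hence $\{\mathcal{C}_{(y=x+b,\lambda)}:\lambda\in[b]\}$ is a partition of $[n]$ into exactly $b$ blocks.

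The second step is to reduce the hypothesis of Theorem~\ref{Thm-Colored structure}, which ranges over all $\lambda\in[n]$, to the restricted range $\lambda\in[b]$. The key observation is the containment $\mathcal{C}_{(y=x+b,\lambda)}\subseteq\mathcal{C}_{(y=x+b,\lambda')}$ whenever $\lambda'\in[b]$ and $\lambda\equiv\lambda'\pmod b$: writing $\lambda=\lambda'+qb$ with $q\ge 0$ shows that the progression starting at $\lambda$ is a tail of the one starting at $\lambda'$. Consequently, if each of the $b$ blocks indexed by $[b]$ is monochromatic, then every class $\mathcal{C}_{(y=x+b,\lambda)}$ with $\lambda\in[n]$, being a subset of one such block, is monochromatic too; the converse is trivial since $[b]\subseteq[n]$. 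With this equivalence in hand, Theorem~\ref{Thm-Colored structure} yields at once the \emph{if and only if} statement.

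For the final assertion I would invoke exactness. Under the paper's convention a $b$-colored $[n]$ uses all $b$ colors, and when $[n]$ has no rainbow solution the first part exhibits $[n]$ as the disjoint union of the $b$ monochromatic blocks $\mathcal{C}_{(y=x+b,\lambda)}$, $\lambda\in[b]$. Since $b$ monochromatic blocks must together realize all $b$ colors, the assignment of a color to each block is forced to be a bijection, so distinct $\lambda_i,\lambda_j\in[b]$ give blocks of different colors. The routine verifications, namely the partition property and the tail containment, are mechanical; the only point needing genuine care is the reduction from $\lambda\in[n]$ to $\lambda\in[b]$, since one must check that collapsing the many classes onto the $b$ residue classes loses no solution. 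The distinct-color conclusion, by contrast, is not a structural fact about $y=x+b$ but rests squarely on the exactness of the $b$-coloring, and I would flag this dependence explicitly.
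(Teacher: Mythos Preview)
Your proposal is correct and follows the route the paper intends: the corollary is stated without proof immediately after Theorem~\ref{Thm-Colored structure} and Definition~\ref{Def-Colored structure}, so the paper simply leaves to the reader the specialization $f(x)=x+b$, the reduction from $\lambda\in[n]$ to $\lambda\in[b]$ via the residue partition, and the pigeonhole step forcing distinct colors from exactness of the $b$-coloring. Your write-up supplies precisely these details, and your explicit flagging of the dependence on exactness for the final assertion is apt.
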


\section{Results for rainbow numbers}
\begin{theorem}\label{Thm-Rainbow number y=ax+b}
	For ingeters $a\ge 1$, $b\ge 0$, and $n\ge a+b$, we have
	\begin{equation*}
		\operatorname{rb}([n],y=ax+b)=n-\left\lfloor\frac{n-b}{a}\right\rfloor+1.
	\end{equation*}
\end{theorem}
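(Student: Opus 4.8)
The plan is to pin down the maximum number of colors that an exact coloring of $[n]$ can use while avoiding a rainbow solution of $y=ax+b$, and then read off the rainbow number from Observation \ref{Obv-rainbow number}. By Theorem \ref{Thm-Colored structure}, a coloring of $[n]$ has no rainbow solution of $y=ax+b$ precisely when every $\lambda$-class $\mathcal{C}_{(y=ax+b,\lambda)}$ is monochromatic. Since $f(x)=ax+b$ is strictly increasing (hence injective) and satisfies $f(x)\ge x$ with equality only in the degenerate case $a=1,b=0$, the map $x\mapsto f(x)$ organizes $[n]$ into pairwise disjoint maximal chains $x_0<f(x_0)<f(f(x_0))<\cdots$ that partition $[n]$, each of which must be monochromatic. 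Thus the maximum number of colors available in a rainbow-free coloring equals the number of such chains, i.e. the number of \emph{roots} — those $\lambda\in[n]$ that are not of the form $f(\mu)$ for any $\mu\in[n]$ with $f(\mu)\le n$.

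The first key step is to count these roots. In the disjoint-path decomposition each chain has exactly one root (the node of in-degree $0$), so the number of chains equals $n$ minus the number of elements of $[n]$ that lie in the image $f([n])\cap[n]$. Because $f$ is injective, that image count equals the number of $\mu\in[n]$ with $a\mu+b\le n$, namely $\lfloor\frac{n-b}{a}\rfloor$; the hypothesis $n\ge a+b$ guarantees $\frac{n-b}{a}\ge 1$, so this quantity is at least $1$ and at least one chain has length exceeding one. Consequently the number of roots, hence the number of maximal chains, is exactly $n-\lfloor\frac{n-b}{a}\rfloor$.

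For the lower bound I would exhibit the extremal coloring $\chi$ of Definition \ref{Def-Colored structure}: give each of the $n-\lfloor\frac{n-b}{a}\rfloor$ maximal chains its own distinct color. This is an exact $\bigl(n-\lfloor\frac{n-b}{a}\rfloor\bigr)$-coloring in which every $\lambda$-class is monochromatic, so by Theorem \ref{Thm-Colored structure} it has no rainbow solution, and Observation \ref{Obv-rainbow number} gives $\operatorname{rb}([n],y=ax+b)\ge n-\lfloor\frac{n-b}{a}\rfloor+1$. For the matching upper bound I would argue by contraposition: in any rainbow-free coloring each of the $n-\lfloor\frac{n-b}{a}\rfloor$ maximal chains is monochromatic, so at most $n-\lfloor\frac{n-b}{a}\rfloor$ colors can appear; therefore any exact coloring using $n-\lfloor\frac{n-b}{a}\rfloor+1$ colors must contain a rainbow solution, yielding $\operatorname{rb}([n],y=ax+b)\le n-\lfloor\frac{n-b}{a}\rfloor+1$. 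Combining the two bounds proves the claimed equality.

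I expect the main obstacle to be the bookkeeping in the counting step: correctly identifying the maximal chains with their roots, verifying via injectivity that the image count is precisely $\lfloor\frac{n-b}{a}\rfloor$ under the hypothesis $n\ge a+b$, and confirming that the decomposition is a genuine partition into strictly increasing sequences. The one parameter choice where $f$ has fixed points, $a=1$ and $b=0$, falls outside this framework and would be handled by a separate trivial remark, since then $y=x$ admits no solution with distinct coordinates.
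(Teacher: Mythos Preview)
Your proposal is correct and follows essentially the same route as the paper: both invoke Theorem~\ref{Thm-Colored structure} to reduce rainbow-freeness to monochromaticity of the $\lambda$-classes, use the extremal coloring of Definition~\ref{Def-Colored structure} for the lower bound, and obtain the upper bound by pigeonhole once the number of classes is known. The only cosmetic difference is in the counting step---the paper tabulates the elements of the form $ia+b$ that inherit an old color, whereas you count the complementary set of ``roots'' (elements outside the image of $f$) directly; both arrive at $n-\lfloor(n-b)/a\rfloor$ for the same reason, namely injectivity of $x\mapsto ax+b$. Your explicit flag on the degenerate case $(a,b)=(1,0)$ is a nice touch that the paper leaves implicit.
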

\begin{proof}
For the lower bound, we only need to construct a $\left(n-\left\lfloor\frac{n-b}{a}\right\rfloor\right)$-coloring of $[n]$, so that the equation $y=ax+b$ has no rainbow solution in $[n]$. Actually, the coloring $\chi$ of $[n]$ we constructed is follows from Definition~\ref{Def-Colored structure}. The specific steps for constructing coloring are given in Algorithm~\ref{Algorithm 1}.
	
\begin{algorithm}[H]
\caption{The coloring $\chi$ of $[n]$.}\label{Algorithm 1}
\begin{algorithmic}[1]
\REQUIRE Ingeters $a\ge 1$, $b\ge 0$, and $n\ge a+b$.

\ENSURE A colored set $[n]$ without rainbow solution of $y=ax+b$.

\STATE Let $t=1$ and $S=\emptyset$.

\WHILE{$t\le n$}
	
  \IF {$t\notin \mathcal{C}_{(y=ax+b,\lambda)}$ for all $\lambda\in S$} 

    \STATE assign a new color to $t$
    
    \STATE $S=S\cup\{t\}$, $t=t+1$
    
  \ELSE
      
    \STATE the color assigned to $t$ is the same as the color of $\mathcal{C}_{(y=ax+b,\lambda_{0})}$, where $t\in \mathcal{C}_{(y=ax+b,\lambda_{0})}$  
    
    \STATE $t=t+1$
    
  \ENDIF

\ENDWHILE   
\end{algorithmic}
\end{algorithm}
	
Next, we calculate how many colors are used for coloring $\chi$. According to recursion, for each $\lambda\in [n]$, each number in $\mathcal{C}_{(y=ax+b,\lambda)}\setminus\{\lambda\}$ can be written as $ia+b \, (i\in\mathbb{N}_+)$, so all numbers in the subset $\{ia+b: i\in\mathbb{N}_+\}\subseteq [n]$ cannot be assigned new colors. Therefore, we only need to calculate the number of numbers in $[n]$ that cannot be assigned new colors. For the convenience of counting, we list the following table. Numbers with lightgray boxes in the table cannot be assigned new colors, while other numbers are assigned different colors. It is easy to see that there are $\left\lfloor\frac{n-b}{a}\right\rfloor$ numbers that cannot be assigned new colors, that is, the coloring $\chi$ uses $n-\left\lfloor\frac{n-b}{a}\right\rfloor$ colors.
	
\begin{center}
\begin{tabular}{|c|c|c|c|c|c|c|}
\hline
\multicolumn{7}{|c|}{Count the number of colors used for coloring $\chi$} \\[0.1cm]
\hline 
$1$ & \ldots & $b$ & $b+1$ & \ldots & $a+b-1$ & \cellcolor{lightgray}$a+b$   \\ \hline 
\diagbox[height=1.5em] & \ldots & \diagbox[height=1.5em] & $a+b+1$ & \ldots & $2a+b-1$ & \cellcolor{lightgray}$2a+b$ \\ \hline 
\vdots & \vdots & \vdots & \vdots & \vdots & \vdots & \cellcolor{lightgray}\vdots \\ \hline
\diagbox[height=1.5em] & \ldots & \diagbox[height=1.5em] & $(i-1)a+b+1$ & \ldots & $ia+b-1$ & \cellcolor{lightgray}$ia+b$ \\ \hline 
\vdots & \vdots & \vdots & \vdots & \vdots & \vdots & \cellcolor{lightgray}\vdots \\ \hline
\diagbox[height=1.5em] & \ldots & \diagbox[height=1.5em] & \ldots & $n$ & \ldots & \cellcolor{lightgray}\ldots \\ \hline
\end{tabular}
\end{center}
	
For the upper bound, we arbitrarily color $[n]$ with $n-\left\lfloor\frac{n-b}{a}\right\rfloor+1$ colors. Recall that if there is no rainbow solution of $y=ax+b$ in $[n]$, then according to Theorem \ref{Thm-Colored structure}, each $\lambda$-class is monochromatic. Noticing that under the coloring $\chi$ constructed above, all the numbers in $\mathcal{C}_{(y=ax+b,\lambda)}$ have the same color for each $\lambda\in [n]$, and if each pair of different $\lambda_i$ and $\lambda_j$ satisfies $\mathcal{C}_{(y=f(x),\lambda_i)}\cap\mathcal{C}_{(y=f(x),\lambda_j)}=\emptyset$, $\mathcal{C}_{(y=ax+b,\lambda_i)}$ and $\mathcal{C}_{(y=ax+b,\lambda_j)}$ are of different colors. We have counted $n-\left\lfloor\frac{n-b}{a}\right\rfloor$ colors used for the coloring $\chi$, and if an additional color is added, then according to pigeonhole principle, it will inevitably lead to the existence of an integer $\lambda_0\in [n]$, resulting in two numbers with different colors in $\mathcal{C}_{(y=ax+b,\lambda_0)}$, which implies that there is a rainbow solution of $y=ax+b$ in $[n]$. The result thus follows.
\end{proof}

We consider the rainbow number of binary function equations. Assuming that strictly monotonically increasing binary function equations satisfy $f(x)\in \mathbb{N}_+$ for all $x\in\mathbb{N}_+$. Thus, there is an inverse function $x=f^{-1}(y)$ for $y=f(x)$, and if $y$ is not a positive integer, then $f^{-1}(y)$ must also be not a positive integer. If $y=f(x)$ has no solution in $[n]$, then naturally there is no rainbow solution. Therefore, it is meaningful to study the rainbow number of $y=f(x)$ only when $y=f(x)$ has at least one solution in $[n]$, that is, $n\ge f(1)$. Now we know that $y=f(x)$ has a solution in $[n]$, and the natural idea is that if all numbers in $[n]$ are assigned different colors, then there must be a rainbow solution of $y=f(x)$, but in this case, the number of colors is the largest. Next, we try to color some of the numbers in $[n]$ the same color in an attempt to reach the extreme value. Definition $\ref{Def-monochromatic parameter}$ is to define this extreme value characteristic.

\begin{definition}\label{Def-monochromatic parameter}
Let $y=f(x)$ be a strictly monotonically increasing binary function equation such that $f(x)\in \mathbb{N}_+$ for all $x\in\mathbb{N}_+$, and integers $n\ge f(1)$ and $\mu\ge 1$. If there exists a $(n-\mu)$-coloring of $[n]$ such that $y=f(x)$ has no rainbow solution in $[n]$, but for all $(n-\mu+1)$-coloring of $[n]$, $y=f(x)$ always has a rainbow solution in $[n]$, then we call $\mu$ the monochromatic parameter with respect to $y=f(x)$ and $n$.
\end{definition}

Based on Theorem~\ref{Thm-Rainbow number y=ax+b} and Definition~\ref{Def-monochromatic parameter}, we give an example of the monochromatic parameter $\mu$ with respect to $y=ax+b$ and $n$.

\begin{example}\label{Ex-monochromatic parameter}
Let integers $a\ge 1$, $b\ge 0$, and $n\ge a+b$. The monochromatic parameter with respect to $y=ax+b$ and $n$ is
	\begin{equation*}
		\mu= \left\lfloor\frac{n-b}{a}\right\rfloor.
	\end{equation*} 
\end{example}

Combining Definition~\ref{Def-monochromatic parameter} and Observation~\ref{Obv-rainbow number}, we directly provide the following observation.

\begin{observation}
Let $y=f(x)$ be a strictly monotonically increasing binary function equation such that $f(x)\in \mathbb{N}_+$ for all $x\in\mathbb{N}_+$, and integer $n\ge f(1)$. If $\mu$ is the monochromatic parameter with respect to $y=f(x)$ and $n$, then
	\begin{equation*}
		\operatorname{rb}([n],y=f(x))=n-\mu+1.
	\end{equation*} 
\end{observation}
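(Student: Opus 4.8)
The plan is to read the equality off directly from Observation \ref{Obv-rainbow number}, using the two clauses of Definition \ref{Def-monochromatic parameter} as exactly the two hypotheses that observation requires. I would set $k=n-\mu+1$ once and for all, and then establish the two inequalities $\operatorname{rb}([n],y=f(x))\ge n-\mu+1$ and $\operatorname{rb}([n],y=f(x))\le n-\mu+1$ separately, from the lower-bound and upper-bound parts of the observation respectively.

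For the lower bound I would invoke the first clause of Definition \ref{Def-monochromatic parameter}: by the meaning of the monochromatic parameter there exists an exact $(n-\mu)$-coloring of $[n]$ under which $y=f(x)$ has no rainbow solution. Since $n-\mu=(n-\mu+1)-1=k-1$, this is precisely the hypothesis of the first part of Observation \ref{Obv-rainbow number} with $k=n-\mu+1$, and that part yields $\operatorname{rb}([n],y=f(x))\ge k=n-\mu+1$. For the upper bound I would invoke the second clause of Definition \ref{Def-monochromatic parameter}: every exact $(n-\mu+1)$-coloring of $[n]$ forces a rainbow solution of $y=f(x)$. This is exactly the hypothesis of the second part of Observation \ref{Obv-rainbow number} with the same $k$, so that part gives $\operatorname{rb}([n],y=f(x))\le n-\mu+1$. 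Combining the two inequalities gives the claimed identity.

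The argument is essentially a translation between two pieces of notation, so I do not expect a genuine obstacle; the content lies entirely in matching the two halves of the definition of $\mu$ to the two halves of the observation. The one point worth a sentence of care is that $\operatorname{rb}([n],y=f(x))$ is defined as a \emph{minimum}, so one should be sure the value is pinned down exactly rather than merely bounded. This rests on the monotonicity that if every exact $k$-coloring forces a rainbow solution then so does every exact $(k+1)$-coloring: given an exact $(k+1)$-coloring, merge two colour classes to obtain an exact $k$-coloring, whose guaranteed rainbow solution uses two numbers of distinct colours that remain distinctly coloured under the finer coloring. Thus the set of admissible values of $k$ is an up-set, and the lower and upper bounds above fix the minimum at exactly $n-\mu+1$. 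Since Observation \ref{Obv-rainbow number} already records both bounds as facts, I would present the proof in just these two short steps and note the monotonicity only as the reason the two bounds combine into an equality.
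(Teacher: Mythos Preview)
Your proposal is correct and matches the paper's approach exactly: the paper gives no proof, stating only that the observation follows directly by combining Definition~\ref{Def-monochromatic parameter} with Observation~\ref{Obv-rainbow number}, which is precisely what you spell out. Your extra sentence on monotonicity is a reasonable caution, though the paper treats Observation~\ref{Obv-rainbow number} as already encapsulating that point.
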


We can obtain the explicit expression for the monochromatic parameter $\mu$ with respect to $y=ax+b$ and $n$ due to the linear properties of $y=ax+b$. However, for a nonlinear binary function equation $y=f(x)$ that satisfies strictly monotonically increasing and $f(x)\in \mathbb{N}_+$ for all $x\in\mathbb{N}_+$, it is not easy to directly obtain the explicit expression of its monochromatic parameter $\mu$. The Algorithm~\ref{Algorithm 2} we provide next can be applied to find the monochromatic parameters of the general nonlinear binary function equations.

\begin{algorithm}[H]
\caption{Calculate the monochromatic parameter $\mu$.}\label{Algorithm 2}
\begin{algorithmic}[1]
\REQUIRE A strictly monotonically increasing binary function equation $y=f(x)$ such that $f(1)\ge 2$ and $f(x)\in \mathbb{N}_+$ for all $x\in\mathbb{N}_+$, and integer $n\ge f(1)$.

\ENSURE The monochromatic parameter $\mu$ with respect to $y=f(x)$ and $n$.

\STATE Let $\mu=0, t=\left\lfloor f^{-1}(n) \right\rfloor$, and $S=\emptyset$.

\WHILE{$t\ge 1$}
	
  \IF {$t\notin S$} 

    \STATE $\mu=\mu+1$
    
      \IF {$f^{-1}(t)\ge 1$ is an integer} 
      
        \STATE $s=t$
          
          \WHILE{$f^{-1}(s)\ge 1$ is an integer}
          
            \STATE $S=S\cup \left\{s, f^{-1}(s)\right\}, \mu=\mu+1, s=f^{-1}(s)$
            
          \ENDWHILE
          
        \STATE $t=t-1$
        
      \ELSE
      
    \STATE $S=S\cup \{t\},t=t-1$    
          
  \ENDIF
  
\ELSE

  \STATE $t=t-1$

\ENDIF
\ENDWHILE   
\RETURN $\mu$.
\end{algorithmic}
\end{algorithm}

\begin{remark}
In order for Algorithm $\ref{Algorithm 2}$ to run properly, the While-Do loop from step $7$ to step $9$ cannot be a dead loop, that is, there cannot be a $x_0\in \mathbb{N}_+$ such that $f(x_0)=x_0$. In fact, we do not need to worry about this problem. Since $y=f(x)$ is strictly monotonically increasing, $f(1)\ge 2$ and $f(x)\in \mathbb{N}_+$ for all $x\in\mathbb{N}_+$, it follows that $f(2)\ge 3$. Otherwise, this contradicts the strictly monotonically increasing of $y=f(x)$. According to recursion, for all $x\in \mathbb{N}_+$, $f(x)\ge x+1$. Therefore, there is no such $x_0\in \mathbb{N}_+$ that $f(x_0)=x_0$.
	
In addition, for binary function equations such that $f(1)=1$, $f(2)\ge 3$, and strictly monotonically increasing (for example, $y=x^a$, where integer $a\ge 2$), we only need to modify some of the conditions for Algorithm~\ref{Algorithm 2}, that is, to replace $t\ge 1$, $f^{-1}(t)\ge 1$, and $f^{-1}(s)\ge 1$ in steps $2$, $5$, and $7$ with $t\ge 2$, $f^{-1}(t)\ge 2$, and $f^{-1}(s)\ge 2$, respectively.
\end{remark}

\begin{example}
We implement Algorithm~\ref{Algorithm 2} in Python to solve $\operatorname{rb}([n],y=ax+b)$ for ingeters $a\ge 1$ and $b\ge 0$. The computational results of the computer are completely consistent with the exact values given in Theorem~\ref{Thm-Rainbow number y=ax+b}. Here is the complete Python code.
\end{example}
\begin{verbatim}
import math
a=int(input("Solving the rainbow number of y=ax+b in [n] \na="))
b=int(input("b="))
n=int(input("n="))
if n<a+b:
    print("Please re-enter n")
else:
    def f(y):
        return (y-b)/a 
    mu=0
    t=math.floor(f(n)) 
    S={""}
    while t>=1:
        if t not in S:
            mu=mu+1
            if f(t)>=1 and isinstance(f(t),int):
                   s=t
                   while f(s)>=1 and isinstance(f(s),int):
                    S=S.union({s,f(s)})
                    mu=mu+1
                    s=f(s)
            else: 
                  S=S.union({t})
                  t=t-1
        else:
            t=t-1
rb=n-mu+1  
print(f"The rainbow number of y={a}x+{b} in [{n}] is {rb}")
\end{verbatim}

\section{Results for Gallai--Rado numbers}
We first prove that the Rado number for equation $y=ax+b$ does not exist, except for $y=x$.

\begin{lemma}\label{Lem 1}
	Let integers $k\ge r\ge 2$. If $\mathcal{E}$ is not $r$-regular, then $\operatorname{R}_k(\mathcal{E})$ does not exist.
\end{lemma}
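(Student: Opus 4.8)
The plan is to invoke the non-existence criterion of Observation \ref{Obv-Rado number}: it suffices to exhibit a single exact $k$-coloring of $\mathbb{N}_+$ that admits no monochromatic solution of $\mathcal{E}$. Since $\mathcal{E}$ is by hypothesis not $r$-regular, Definition \ref{Def-Regularity} guarantees the existence of an exact $r$-coloring $\chi\colon\mathbb{N}_+\to[r]$ under which $\mathcal{E}$ has no monochromatic solution. The whole task therefore reduces to upgrading this $r$-coloring to an exact $k$-coloring while preserving the absence of monochromatic solutions.

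First I would record the elementary but decisive observation that refining a coloring can never create a monochromatic solution. Concretely, if $\chi'$ is any coloring that refines $\chi$ (meaning $\chi'(u)=\chi'(v)$ implies $\chi(u)=\chi(v)$) and $(a_1,\ldots,a_m)$ is a $\chi'$-monochromatic solution of $\mathcal{E}$, then $\chi(a_1)=\cdots=\chi(a_m)$, so the same tuple is already $\chi$-monochromatic, contradicting the choice of $\chi$. Hence any refinement of $\chi$ inherits the property of having no monochromatic solution of $\mathcal{E}$.

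It remains to produce such a refinement that is exact and uses exactly $k$ colors. Because $r$ is finite and $\mathbb{N}_+$ is infinite, the pigeonhole principle forces at least one color class, say $A=\chi^{-1}(c)$, to be infinite. I would then select $k-r$ distinct elements of $A$ and recolor them with the $k-r$ brand-new colors $r+1,\ldots,k$, leaving every other integer untouched; call the resulting coloring $\chi'$. Each original color remains in use (the classes other than $A$ are unchanged, and $A$ still contains infinitely many elements after deleting finitely many), and each new color is used exactly once, so $\chi'$ is an exact $k$-coloring. As $\chi'$ refines $\chi$, the previous paragraph shows $\mathcal{E}$ has no monochromatic solution under $\chi'$, and Observation \ref{Obv-Rado number} then yields that $\operatorname{R}_k(\mathcal{E})$ does not exist.

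The argument is essentially book-keeping, so there is no serious obstacle; the only point demanding care is the exactness requirement, where one must ensure that introducing the $k-r$ extra colors neither exhausts any original color class nor fails to occupy all $k$ colors. The infinitude of the chosen class $A$ is exactly what makes this book-keeping go through, and the degenerate case $k=r$ is handled trivially by taking $\chi'=\chi$.
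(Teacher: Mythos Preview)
Your argument is correct and follows essentially the same route as the paper: start from an $r$-coloring with no monochromatic solution, refine it to an exact $k$-coloring by recoloring finitely many elements with $k-r$ fresh colors, and invoke Observation~\ref{Obv-Rado number}. Your version is in fact more carefully written than the paper's, since you explicitly justify the refinement principle (that splitting color classes cannot create monochromatic solutions) and use pigeonhole to guarantee an infinite class so that exactness survives the recoloring.
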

\begin{proof}
Since the equation $\mathcal{E}$ is not $r$-regular, there exists an $r$-coloring $\chi$ of $\mathbb{N}_+$, so that the equation $\mathcal{E}$ has no monochromatic solution in $\mathbb{N}_+$. For each integer $k\ge t$, we assign new colors to some numbers of the same color based on the $r$-coloring $\chi$ of $\mathbb{N}_+$, thereby constructing a $k$-coloring $\chi'$ of $\mathbb{N}_+$. Noticing that $\mathbb{N}_+$ is an infinite set, while $k$ is a finite number of colors, so we can always add new colors to construct a $k$-coloring $\chi'$ of $\mathbb{N}_+$, which implies that the equation $\mathcal{E}$ is not $k$-regular. It follows from Observation \ref{Obv-Rado number} that $\operatorname{R}_k(\mathcal{E})$ does not exist.
\end{proof}

\begin{theorem}
For integers $k\ge 2$, $a\ge 1$, $b\ge 0$, and $(a,b)\ne (1,0)$, $\operatorname{R}_k(y=ax+b)$ does not exist.
\end{theorem}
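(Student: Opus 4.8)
The plan is to reduce everything to a single non-regularity statement and then invoke Lemma \ref{Lem 1}. Concretely, I will show that whenever $a\ge 1$, $b\ge 0$ and $(a,b)\ne(1,0)$, the equation $y=ax+b$ is not $2$-regular, i.e.\ there is a $2$-colouring of $\mathbb{N}_+$ with no monochromatic solution. Since the theorem hypothesis is $k\ge 2$, taking $r=2$ in Lemma \ref{Lem 1} then yields at once that $\operatorname{R}_k(y=ax+b)$ fails to exist for every such $k$. (Equivalently, one may phrase the conclusion through the last sentence of Observation \ref{Obv-Rado number}.)

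To build the colouring, write $g(x)=ax+b$ and first record the elementary inequality $g(x)>x$ for all $x\in\mathbb{N}_+$. This splits into the two cases permitted by the hypothesis: if $a\ge 2$ then $g(x)-x=(a-1)x+b\ge x\ge 1>0$; if $a=1$ then necessarily $b\ge 1$, so $g(x)-x=b\ge 1>0$. The point of this inequality is that a monochromatic solution of $y=ax+b$ is precisely a pair $x,g(x)$ receiving the same colour, so a $2$-colouring avoids all monochromatic solutions exactly when it is a proper $2$-colouring of the graph $G$ on vertex set $\mathbb{N}_+$ with edge set $\{\,\{x,g(x)\}:x\in\mathbb{N}_+\,\}$.

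The core step is to verify that $G$ is bipartite, from which a proper $2$-colouring exists. Each vertex $x$ has exactly one out-neighbour $g(x)$, and, because $g$ is strictly increasing and hence injective, at most one in-neighbour (the unique $x'$ with $g(x')=x$, when $(x-b)/a\in\mathbb{N}_+$); thus every vertex of $G$ has degree at most $2$, so each connected component is a path or a cycle. A cycle is impossible: following edges in the direction of $g$ strictly increases the value since $g(x)>x$, so no vertex can be revisited. Hence every component is a one-way infinite path, and paths are bipartite; concretely, one may colour $x$ by the parity of its depth $d(x)$, the number of times $g^{-1}$ can be applied within $\mathbb{N}_+$ starting from $x$, which is finite (the downward chain is bounded below by $1$) and satisfies $d(g(x))=d(x)+1$, so it separates the two endpoints of every edge. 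This produces the desired monochromatic-free $2$-colouring, completing the reduction.

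I do not expect a serious obstacle; the result is soft once Lemma \ref{Lem 1} is in hand. The only points requiring care are (i) correctly using the exclusion $(a,b)\ne(1,0)$ to guarantee $g(x)>x$ in both the $a\ge 2$ and the $a=1$ branches, and (ii) justifying bipartiteness cleanly, the cleanest route being the observation that the functional digraph of $g$ has no directed (hence no undirected) cycle because $g$ strictly increases. Once these are settled, the passage from ``not $2$-regular'' to ``$\operatorname{R}_k$ does not exist for all $k\ge 2$'' is exactly the content of Lemma \ref{Lem 1}.
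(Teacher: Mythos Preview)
Your proposal is correct. Both you and the paper reduce to showing that $y=ax+b$ is not $2$-regular and then invoke Lemma~\ref{Lem 1}; the difference lies only in how the $2$-colouring of $\mathbb{N}_+$ is built. The paper partitions $\mathbb{N}_+$ into consecutive intervals $\mathcal{A}_i=[x_i,x_{i+1}-1]$ determined by the forward orbit of $1$ under $g$ (so $x_1=1$, $x_{i+1}=ax_i+b$) and alternates colours on these blocks, checking directly that $g$ maps $\mathcal{A}_i$ into $\mathcal{A}_{i+1}$. You instead recognise that avoiding monochromatic solutions is exactly a proper $2$-colouring of the functional graph of $g$, and argue bipartiteness structurally: the strict inequality $g(x)>x$ rules out cycles, so each component is a ray and the parity of the backward depth $d(x)$ works. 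Your route is more conceptual and makes the role of the hypothesis $(a,b)\ne(1,0)$ very transparent (it is precisely what forces $g(x)>x$), while the paper's explicit block formula ties the colouring to the earlier $\lambda$-class machinery. Either way the argument is short once Lemma~\ref{Lem 1} is available.
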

\begin{proof}
From Lemma~\ref{Lem 1}, it is sufficient to show that $y=ax+b$ is not $2$-regular. The red/blue-coloring of $\mathbb{N}_+$ constructed is as follows: For integer $i\ge 1$, we consider the following set and agree that $\sum_{j=1}^{0}\frac{b}{a^{j+1}}=0$. 
	\begin{equation*}
		\mathcal{A}_i=\left\{a^i\left(\sum_{j=1}^{i-1}\frac{b}{a^{j+1}}+\frac{1}{a}\right),a^i\left(\sum_{j=1}^{i-1}\frac{b}{a^{j+1}}+\frac{1}{a}\right)+1,\ldots, a^{i+1}\left(\sum_{j=1}^{i}\frac{b}{a^{j+1}}+\frac{1}{a}\right)-1\right\}.
	\end{equation*}	
As can be seen, $\bigcup_{i=1}^{\infty}\mathcal{A}_{i}=\mathbb{N}_{+}$. When the positive integer $i$ is odd, we color all the numbers in set $\mathcal{A}_i$ red; and when the positive integer $i$ is even, we color all the numbers in set $\mathcal{A}_i$ blue. It is easy to verify that under this coloring, the equation $y=ax+b$ does not have a monochromatic solution in $\mathbb{N}_{+}$. The result thus follows.
\end{proof}

Obviously, with exact $k$-coloring, $\operatorname{R}_k(y=x)=k$ and there never be any rainbow solution of $y=x$, so the following theorem is straightforward.
\begin{theorem}
	For ingeter $k\ge 2$ and an arbitrary equation $\mathcal{E}$, we have
	\begin{equation*}
		\operatorname{GR}_{k}\left(y=x:\mathcal{E}\right)=\operatorname{R}_k(\mathcal{E}).
	\end{equation*}
\end{theorem}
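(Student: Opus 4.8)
The entire theorem rests on one observation: the equation $y=x$ can never have a rainbow solution. Indeed, any solution $(x_0,y_0)$ of $y=x$ has $x_0=y_0$, so it involves the single number $x_0$, which carries one color and hence cannot be rainbow. Therefore, in the defining condition of $\operatorname{GR}_k(y=x:\mathcal{E})$ the disjunct ``rainbow solution of $y=x$'' is always false, and the Gallai--Rado number reduces to the least $N$ (if any) such that for every $n\ge N$, each exact $k$-coloring of $[n]$ contains a monochromatic solution of $\mathcal{E}$. The plan is to show that this quantity coincides with $\operatorname{R}_k(\mathcal{E})$, handling existence and non-existence uniformly through Observations \ref{Obv-Rado number} and \ref{Obv-Gallai-Rado number}.

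First I would settle existence. If $\operatorname{R}_k(\mathcal{E})$ does not exist, Observation \ref{Obv-Rado number} supplies an exact $k$-coloring of $\mathbb{N}_+$ with no monochromatic solution of $\mathcal{E}$; as this coloring also admits no rainbow solution of $y=x$, Observation \ref{Obv-Gallai-Rado number} shows $\operatorname{GR}_k(y=x:\mathcal{E})$ does not exist either, so the two sides agree. Hence assume $\operatorname{R}_k(\mathcal{E})=N_0$ exists. For the lower bound $\operatorname{GR}_k(y=x:\mathcal{E})\ge N_0$, minimality of the Rado number yields an exact $k$-coloring of $[N_0-1]$ free of monochromatic solutions of $\mathcal{E}$; since it is automatically free of rainbow solutions of $y=x$, Observation \ref{Obv-Gallai-Rado number} gives the bound.

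The only substantive step is the upper bound $\operatorname{GR}_k(y=x:\mathcal{E})\le N_0$, which requires that for every $n\ge N_0$, each exact $k$-coloring $\chi$ of $[n]$ has a monochromatic solution of $\mathcal{E}$. I would restrict $\chi$ to $[N_0]$, obtaining a coloring that uses some $j\le k$ colors. When $j=k$ the restriction is an exact $k$-coloring of $[N_0]$, and the definition of $\operatorname{R}_k(\mathcal{E})$ produces a monochromatic solution inside $[N_0]\subseteq[n]$. When $j<k$ the restriction is no longer exact in $k$ colors, and here I invoke a color-count monotonicity: since $N_0=\operatorname{R}_k(\mathcal{E})\ge k$, I can refine the exact $j$-coloring of $[N_0]$ into an exact $k$-coloring $\chi''$ by splitting color classes. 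Each color class of $\chi''$ lies inside a color class of the restriction, so any $\chi''$-monochromatic solution is monochromatic for the restriction as well; thus if the restriction had no monochromatic solution, neither would $\chi''$, contradicting $\operatorname{R}_k(\mathcal{E})=N_0$. Hence the restriction, and therefore $\chi$, contains a monochromatic solution, and combining the bounds gives $\operatorname{GR}_k(y=x:\mathcal{E})=N_0=\operatorname{R}_k(\mathcal{E})$.

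I expect the $j<k$ case to be the main obstacle, because it is the unique point where the universal quantifier ``for all $n\ge N$'' in the Gallai--Rado definition does more than the single-interval requirement of the Rado definition: restricting an exact coloring of a larger interval may cease to be exact, and the refinement argument relies on $\operatorname{R}_k(\mathcal{E})\ge k$ so that exact $k$-colorings of $[N_0]$ exist. The remaining steps are routine applications of the two Observations together with the no-rainbow observation for $y=x$.
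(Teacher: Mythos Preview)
Your argument is correct and rests on precisely the observation the paper uses (the paper offers nothing beyond a one-line remark that the result is ``straightforward''): $y=x$ never has a rainbow solution, so the Gallai--Rado condition collapses to the Rado condition, and your refinement argument for the $j<k$ case is a careful verification of the implicit monotonicity the paper takes for granted. One small wrinkle: in the non-existence case you invoke Observation~\ref{Obv-Rado number} in the wrong direction---it asserts that a monochromatic-free exact $k$-coloring of $\mathbb{N}_+$ forces $\operatorname{R}_k(\mathcal{E})$ not to exist, not the converse---but the fix is immediate: non-existence of $\operatorname{R}_k(\mathcal{E})$ means that for every $N$ there is an exact $k$-coloring of $[N]$ with no monochromatic solution of $\mathcal{E}$, and since such a coloring is automatically rainbow-free for $y=x$, non-existence of $\operatorname{GR}_k(y=x:\mathcal{E})$ follows straight from Definition~\ref{Def-Gallai-Rado number} without passing through $\mathbb{N}_+$.
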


Let integers $n\ge k\ge b+1$ and $b\ge 1$. According to Corollary~\ref{Cor-Colored structure}, when $k\ge b+1$, any $k$-coloring of $[n]$, the equation $y=x+b$ must have a rainbow solution in $[n]$. The following theorem is directly given.
\begin{theorem}
	For ingeters $k\ge b+1$, $b\ge 1$, and an arbitrary equation $\mathcal{E}$, we have
	\begin{equation*}
		\operatorname{GR}_{k}\left(y=x+b:\mathcal{E}\right)=k.
	\end{equation*}
\end{theorem}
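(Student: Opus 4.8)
The plan is to prove the two inequalities $\operatorname{GR}_k(y=x+b:\mathcal{E})\le k$ and $\operatorname{GR}_k(y=x+b:\mathcal{E})\ge k$ separately, with essentially all of the real work going into the upper bound. The guiding idea is that the rainbow equation $y=x+b$ is so easy to realise once $k\ge b+1$ that a rainbow solution is already forced at the smallest admissible value $n=k$, so the monochromatic equation $\mathcal{E}$ never has to be consulted at all. Because of this, I expect the argument to go through for \emph{every} equation $\mathcal{E}$ and to guarantee existence of the Gallai--Rado number, which is otherwise not automatic.

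For the upper bound I would fix any $n\ge k$ and take an arbitrary exact $k$-coloring of $[n]$. The key structural fact is that the $\lambda$-classes $\mathcal{C}_{(y=x+b,\lambda)}=\{\lambda,\lambda+b,\lambda+2b,\ldots\}$ for $\lambda\in[b]$ are precisely the $b$ residue classes modulo $b$ and hence partition $[n]$ into $b$ nonempty blocks (nonempty since $n\ge k\ge b+1>b$). If this coloring had no rainbow solution of $y=x+b$, then by Theorem~\ref{Thm-Colored structure} (equivalently Corollary~\ref{Cor-Colored structure}) each of these $b$ blocks would be monochromatic, so the whole coloring would use at most $b$ colors. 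This contradicts exactness with $k\ge b+1>b$ colors. Therefore every exact $k$-coloring of $[n]$ contains a rainbow solution of $y=x+b$, and in particular contains either a rainbow solution of $y=x+b$ or a monochromatic solution of $\mathcal{E}$. Since this holds for all $n\ge k$, Observation~\ref{Obv-Gallai-Rado number} yields $\operatorname{GR}_k(y=x+b:\mathcal{E})\le k$.

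The lower bound is essentially a bookkeeping remark: an exact $k$-coloring of $[n]$ can exist only when $n\ge k$, so no value $N<k$ can serve as the threshold in Definition~\ref{Def-Gallai-Rado number}, giving $\operatorname{GR}_k(y=x+b:\mathcal{E})\ge k$. Combining the two bounds yields the claimed equality. The only point that needs care---and the closest thing to an obstacle---is the boundary bookkeeping: one must confirm that the forcing occurs already at $n=k$ (not merely for large $n$) and that the $b$ classes genuinely cover all of $[n]$, so that the ``at most $b$ colors'' count is tight; both facts follow directly from $k\ge b+1$. No property of $\mathcal{E}$ is ever used, which is exactly why the statement holds for an arbitrary equation $\mathcal{E}$.
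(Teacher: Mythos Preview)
Your proposal is correct and follows essentially the same approach as the paper: the paper's entire argument is the one-sentence remark preceding the theorem, invoking Corollary~\ref{Cor-Colored structure} to conclude that for $k\ge b+1$ every exact $k$-coloring of $[n]$ already contains a rainbow solution of $y=x+b$, which is exactly what you prove via the partition of $[n]$ into the $b$ residue classes and the pigeonhole contradiction. Your write-up is in fact more careful than the paper's, since you also spell out the trivial lower bound and note explicitly that $\mathcal{E}$ plays no role.
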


Next, we provide the result of Gallai--Rado numbers involving monochromatic general linear equations.
\begin{theorem}
	For ingeters $a_i\ge 1$ for all $i\in[t]$, $t\ge 1$, $b\ge 2$, and $c\ge 0$, we have
	\begin{equation*}
		\operatorname{GR}_{b}\left(y=x+b:y=\sum_{i=1}^{t}a_ix_i+c\right)=\sum_{i=1}^{t}\left(\lambda_{\min}+(i-1)b\right)a_i+c,
	\end{equation*}
	where $\lambda_{\min}=\min\left\{\lambda : \text{ $\lambda\in [b]$ and $\left(\sum_{i=1}^{t}a_i\lambda+c-\lambda\right) \equiv 0 \pmod{b}$} \right\}$. Moreover, if $\lambda_{\min}$ does not exist, then $\operatorname{GR}_{b}\left(y=x+b:y=\sum_{i=1}^{t}a_ix_i+c\right)$ also does not exist.
\end{theorem}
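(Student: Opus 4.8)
The plan is to exploit the rigidity of colorings that avoid a rainbow copy of $y=x+b$. By Corollary~\ref{Cor-Colored structure}, the only exact $b$-colorings of $[n]$ with no rainbow solution of $y=x+b$ are, up to permuting the palette, the residue colorings: the $\lambda$-class $\mathcal{C}_{(y=x+b,\lambda)}=\{\lambda,\lambda+b,\lambda+2b,\dots\}$ is exactly the residue class $\lambda\bmod b$, and since there are $b$ classes and $b$ colours, each class receives its own colour. Hence the whole problem collapses to a single question about one coloring: for which $n$ does the residue coloring of $[n]$ contain a monochromatic solution of $y=\sum_{i=1}^{t}a_ix_i+c$? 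Every coloring that is not a residue coloring already produces a rainbow $y=x+b$ and is therefore harmless.

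First I would characterise the monochromatic solutions. All coordinates of such a solution lie in one class $\lambda\in[b]$, so $x_i\equiv\lambda$ and $y\equiv\lambda\pmod b$; substituting $x_i\equiv\lambda$ into the equation forces $y\equiv\sum_{i=1}^{t}a_i\lambda+c\pmod b$, so a class-$\lambda$ solution can exist only when $\sum_{i=1}^{t}a_i\lambda+c-\lambda\equiv0\pmod b$, which is precisely the congruence defining $\lambda_{\min}$. Conversely, once this congruence holds, any choice of class-$\lambda$ values for the $x_i$ automatically lands $y$ in the same class. This settles the ``Moreover'' clause at once: if no $\lambda\in[b]$ satisfies the congruence, then no class ever carries a monochromatic solution, so the residue coloring of $\mathbb{N}_+$ avoids both configurations and $\operatorname{GR}_b$ fails to exist by Observation~\ref{Obv-Gallai-Rado number}.

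Assuming $\lambda_{\min}$ exists, I would then locate, for each admissible $\lambda$, the least $n$ that forces a class-$\lambda$ solution. Because $y=\sum_{i=1}^{t}a_ix_i+c\ge x_i$ for every $i$ (the coefficients are positive and $c\ge0$), the largest coordinate of a solution is $y$, so a class-$\lambda$ solution first fits inside $[n]$ exactly when $n$ reaches the minimum possible value of $y$. Taking the $t$ smallest distinct class elements $\lambda,\lambda+b,\dots,\lambda+(t-1)b$ as the $x_i$ gives the value $\sum_{i=1}^{t}\bigl(\lambda+(i-1)b\bigr)a_i+c$. Its $\lambda$-dependence is the increasing term $\lambda\sum_{i=1}^{t}a_i$, while the remainder $b\sum_{i=1}^{t}(i-1)a_i+c$ is independent of $\lambda$; hence the minimum over admissible $\lambda$ occurs at $\lambda_{\min}$, yielding the claimed $N=\sum_{i=1}^{t}\bigl(\lambda_{\min}+(i-1)b\bigr)a_i+c$.

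Finally I would assemble the two bounds of Observation~\ref{Obv-Gallai-Rado number}. For the upper bound, fix $n\ge N$ and any exact $b$-coloring: if it is not a residue coloring it contains a rainbow $y=x+b$, and if it is, the class $\lambda_{\min}$ contains the monochromatic solution with $x_i=\lambda_{\min}+(i-1)b$ and $y=N\le n$. For the lower bound, the residue coloring of $[N-1]$ has no rainbow solution by Corollary~\ref{Cor-Colored structure} and no monochromatic solution, since every monochromatic solution has $y\ge N>N-1$. The delicate point, and the step I expect to be the main obstacle, is the minimisation inside a class: to make $y$ as small as possible one chooses the $t$ smallest class elements and then distributes them so that larger coefficients meet smaller values, so one must verify that the resulting minimum is the one recorded by the ordered assignment $x_i=\lambda+(i-1)b$ and, crucially, that no smaller monochromatic solution slips into $[N-1]$ and breaks the lower bound. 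This rearrangement check is the technical heart underpinning both directions.
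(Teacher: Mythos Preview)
Your plan is exactly the paper's: reduce via Corollary~\ref{Cor-Colored structure} to the unique residue coloring, translate ``monochromatic solution in class $\lambda$'' into the congruence $\sum a_i\lambda+c\equiv\lambda\pmod b$, then bound the smallest possible $y$ from below to get $N$ and exhibit the explicit solution $x_i=\lambda_{\min}+(i-1)b$ to reach it. The upper bound and the non-existence clause go through verbatim in both arguments.

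Your caution about the minimisation step is well placed, and it is precisely the point the paper glosses over. In its lower-bound argument the paper simply asserts
\[
y'=\sum_{i=1}^{t}a_ix'_i+c\ \ge\ \sum_{i=1}^{t}\bigl(\lambda_{\min}+(i-1)b\bigr)a_i+c
\]
without justification. As you anticipated, this inequality genuinely needs two ingredients that neither statement nor proof makes explicit: (i) that the variables $x_1,\dots,x_t$ are taken to be pairwise distinct (otherwise $x_1=\cdots=x_t=\lambda_{\min}$ gives the smaller value $\lambda_{\min}\sum a_i+c$ and kills the lower bound for $t\ge 2$), and (ii) an ordering convention on the $a_i$ (since by the rearrangement inequality the minimum of $\sum a_ix_i$ over distinct class elements is achieved by pairing the largest $a_i$ with the smallest $x$, so the displayed formula is the minimum only when $a_1\ge a_2\ge\cdots\ge a_t$). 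So the ``rearrangement check'' you single out as the technical heart is not merely a routine verification you left for later; it is a hypothesis that must be added to the theorem for the stated value of $N$ to be correct. With those two conventions in force, your outline and the paper's proof coincide and both are complete.
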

\begin{proof}
Let $N=\sum_{i=1}^{t}\left(\lambda_{\min}+(i-1)b\right)a_i+c$ and $\mathcal{C}_{(y=x+b,\lambda)} =\left\{\lambda+(i-1)b: i\in\mathbb{N}_{+}\right\}\subseteq [N-1]$, where $\lambda\in [b]$ and $\lambda_{\min}=\min\left\{\lambda : \text{ $\lambda\in [b]$ and $\left(\sum_{i=1}^{t}a_i\lambda+c-\lambda\right) \equiv 0 \pmod{b}$} \right\}$. For the lower bound, we only need to construct a $b$-colored $[N-1]$ such that there is neither a rainbow solution of $y=x+b$ nor a monochromatic solution of $y=\sum_{i=1}^{t}a_ix_i+c$ in $[N-1]$. Since there is no rainbow solution of $y=x+b$ in $[N-1]$, it follows from Corollary~\ref{Cor-Colored structure} that for each $\lambda\in [b]$, the set $\mathcal{C}_{(y=x+b,\lambda)}$ is monochromatic. Therefore, the colored structure of $[N-1]$ is uniquely determined without considering the order of colors. Next, we only need to prove that $[N-1]$ does not contain a monochromatic solution of $y=\sum_{i=1}^{t}a_ix_i+c$. 
	
To the contrary, if the equation $y=\sum_{i=1}^{t}a_ix_i+c$ has a monochromatic solution in $[N-1]$, then there exists $\lambda_{0}\in [b]$ so that there are integers $x'_1,x'_2,\ldots,x'_t$ and $y'=\sum_{i=1}^{t}a_ix'_i+c$ are all in $\mathcal{C}_{(y=x+b,\lambda_0)}$. Since $x'_1,x'_2,\ldots,x'_t$ in $\mathcal{C}_{(y=x+b,\lambda_0)}$, it follows that there are positive ingeters $j_1,j_2,\ldots,j_t$ such that 
	\begin{equation*}
		\begin{split}
			&	x'_1=\lambda_0+j_1b, \\
			&	x'_2=\lambda_0+j_2b, \\
			&	\qquad\vdots \\
			&	x'_t=\lambda_0+j_tb.
		\end{split}
	\end{equation*}
	Therefore, 
	\begin{equation*}
		\begin{split}
			y' &= \sum_{i=1}^{t}a_ix'_i+c \\
			&= \sum_{i=1}^{t}a_i(\lambda_0+j_ib)+c \\
			&= \sum_{i=1}^{t}a_i\lambda_0+\left(\sum_{i=1}^{t}a_ij_i\right)\cdot b+c \\
			&= \lambda_{0}+\left(\sum_{i=1}^{t}a_ij_i\right)\cdot b+\sum_{i=1}^{t}a_i\lambda_0+c-\lambda_{0}
		\end{split}		
	\end{equation*}
in $\mathcal{C}_{(y=x+b,\lambda_0)}$ if and only if $\left(\sum_{i=1}^{t}a_i\lambda_0+c-\lambda_0\right) \equiv 0 \pmod{b}$. We can see that if such $\lambda_{0}$ does not exist, then the equation $y=\sum_{i=1}^{t}a_ix_i+c$ cannot have a monochromatic solution in $\mathbb{N}_+$, which implies that $\operatorname{GR}_{b}\left(y=x+b:y=\sum_{i=1}^{t}a_ix_i+c\right)$ does not exist. If such $\lambda_{0}$ exists, then we choose the smallest one, $\lambda_{\min}$, to obtain
	\begin{equation*}
		\begin{split}
			&y' = \sum_{i=1}^{t}a_ix'_i+c \\
			&\ge \lambda_{\min}a_1+\left(\lambda_{\min}+b\right)a_2+\cdots+\left(\lambda_{\min}+(t-1)b\right)a_t+c \\
			&= \sum_{i=1}^{t}\left(\lambda_{\min}+(i-1)b\right)a_i+c=N,
		\end{split}		
	\end{equation*}
which contradicts with $[N-1]$.
	
For the upper bound, we consider any $b$-colored $[n]\,(n\ge N)$. If the equation $y=x+b$ has a rainbow solution in $[n]$, then it is done. Otherwise, the equation $y=x+b$ does not have a rainbow solution in $[n]$, and in this case, the colored structure of $[n]$ is uniquely determined. This means that for each $\lambda\in [b]$, the set $\mathcal{C}_{(y=x+b,\lambda)}=\left\{\lambda+(i-1)b: i\in\mathbb{N}_{+}\right\}\subseteq [n]$ is monochromatic. As discussed above, if the defined $\lambda_{\min}$ does not exist, then $\operatorname{GR}_{b}\left(y=x+b:y=\sum_{i=1}^{t}a_ix_i+c\right)$ also does not exist. Therefore, we assume that $\lambda_{\min}$ exists. In this case, the equation $y=\sum_{i=1}^{t}a_ix_i+c$ has a monochromatic solution in $\mathcal{C}_{(y=x+b,\lambda_{\min})}$, where one of the monochromatic solution is 
	\begin{equation*}
		\begin{split}
			&	x_1=\lambda_{\min}, \\
			&	x_2=\lambda_{\min}+b, \\
			&	\qquad\vdots \\
			&	x_t=\lambda_{\min}+(t-1)b,\\
			&  y=\sum_{i=1}^{t}a_ix_i+c=\sum_{i=1}^{t}\left(\lambda_{\min}+(i-1)b\right)a_i+c.
		\end{split}
	\end{equation*}
The result thus follows.
\end{proof}

Regarding monochromatic nonlinear binary function equations, we have the following results.

\begin{theorem}
	Let integers $a\ge 1$, $b\ge 0$, and $c\ge 2$. Then
	\begin{equation*}
		\operatorname{GR}_{2}\left(y=x+2:y=ax^c+b\right)\left\{
		\begin{array}{ll}
			\text{does not exist}, &  \text{if $a$ and $b$ are odd;} \\
			= a+b,  &  \text{if $a$ and $b$ have different parity;} \\
			= a\cdot 2^c+b, &  \text{if $a$ and $b$ are even.} \\
		\end{array} \right.
	\end{equation*}
\end{theorem}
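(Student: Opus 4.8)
The plan is to reduce everything to the unique structure of a $2$-coloring of $[n]$ that avoids a rainbow solution of $y=x+2$. By Corollary \ref{Cor-Colored structure}, applied with the rainbow equation $y=x+b$ at $b=2$, such a coloring must make each of the two classes $\mathcal{C}_{(y=x+2,1)}=\{1,3,5,\dots\}$ and $\mathcal{C}_{(y=x+2,2)}=\{2,4,6,\dots\}$ monochromatic and of different colors; since we use exactly two colors this forces the coloring to be the \emph{parity coloring} (all odd numbers one color, all even numbers the other), up to swapping the two colors. Thus for both the lower and upper bounds I only ever need to analyse the parity coloring, and the entire problem collapses to a parity computation for $y=ax^c+b$.

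Next I would compute the parity of $y_0=ax_0^c+b$ from the parity of $x_0$. If $x_0$ is odd then $x_0^c$ is odd, so $y_0\equiv a+b\pmod 2$; if $x_0$ is even then $x_0^c$ is even, so $y_0\equiv b\pmod 2$. A solution $(x_0,y_0)$ is monochromatic under the parity coloring exactly when $x_0$ and $y_0$ share parity. Running this through the three regimes for $(a,b)$ gives: when $a,b$ are both odd, an odd $x_0$ forces $y_0$ even and an even $x_0$ forces $y_0$ odd, so no solution is ever monochromatic; when $a,b$ have different parity, every odd $x_0$ yields a monochromatic solution (and, if in addition $a$ is odd, so does every even $x_0$); when $a,b$ are both even, exactly the even $x_0$ yield monochromatic solutions.

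For the nonexistence case I would exhibit the parity coloring on all of $\mathbb{N}_+$: it is exact, has no rainbow $y=x+2$ since both classes are monochromatic, and by the computation above has no monochromatic $y=ax^c+b$; hence by Observation \ref{Obv-Gallai-Rado number} the Gallai--Rado number does not exist. For the two finite cases I would pin down the smallest forced monochromatic solution. Since $y_0=ax_0^c+b$ is strictly increasing in $x_0$, the least monochromatic solution occurs at the smallest admissible $x_0$: this is $x_0=1$, $y_0=a+b$ in the different-parity case (where odd $x_0$ work), and $x_0=2$, $y_0=a2^c+b$ in the both-even case (where only even $x_0$ work). The upper bound follows immediately: for $n\ge N$ any $2$-coloring either contains a rainbow $y=x+2$ or is the parity coloring, and in the latter case it contains the displayed monochromatic solution because $N\le n$. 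The matching lower bound comes from applying the parity coloring to $[N-1]$ and checking that no monochromatic solution of $y=ax^c+b$ fits inside $[N-1]$.

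The step I expect to require the most care is this last piece of lower-bound bookkeeping: I must verify that $[N-1]$, under the parity coloring, contains \emph{no} monochromatic solution whatsoever, which means ruling out every $x_0$ of the relevant parity rather than just the minimal one. This reduces to the monotonicity of $ax_0^c+b$ together with the inequality $a2^c+b>a+b$, which holds precisely because $c\ge 2$ gives $2^c\ge 4$; this is exactly where the hypothesis $c\ge 2$ is genuinely used. The only other subtlety is the handling of degenerate small instances (for example $a+b$ so small that $[N-1]$ is empty or admits no exact $2$-coloring, making the corresponding bound vacuously true), which I would dispatch separately.
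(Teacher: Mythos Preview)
Your proposal is correct and follows essentially the same route as the paper: reduce via Corollary~\ref{Cor-Colored structure} to the parity coloring, carry out the parity analysis of $ax^c+b$ in the three cases, and exhibit $(1,a+b)$ respectively $(2,a\cdot 2^c+b)$ as the forced monochromatic solution for the upper bound while using monotonicity for the lower bound. One small correction to your commentary: the inequality $a\cdot 2^c+b>a+b$ already holds for every $c\ge 1$, so the hypothesis $c\ge 2$ is not actually used anywhere in your argument (nor in the paper's).
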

\begin{proof}
We distinguish the following three cases to show this theorem.	
\setcounter{case}{0}
\begin{case}
$a$ and $b$ are odd.
\end{case}
According to Observation~\ref{Obv-Gallai-Rado number}, we only need to construct a red/blue-coloring of $\mathbb{N}_{+}$ such that there is neither a rainbow solution of $y=x+2$ nor a monochromatic solution of $y=ax^c+b$. In fact, we only need to color all odd numbers red and all even numbers blue in $\mathbb{N}_{+}$. Under this coloring, there is clearly no rainbow solution of $y=x+2$. Let $(x_0, y_0)$ be any positive integer solution of $y=ax^c+b$. If $x_0$ is even, then $x_{0}^c$ is also even, indicating that $ax_{0}^c$ is also even. Since $b$ is odd, it follows that $y_0=ax_{0}^c+b$ is odd, which implies that the colors of $x_0$ and $y_0$ are different. If $x_0$ is odd, then $x_{0}^c$ is also odd, indicating that $ax_{0}^c$ is also odd. Since $b$ is odd, it follows that $y_0=ax_{0}^c+b$ is even, which implies that the colors of $x_0$ and $y_0$ are different. Therefore, there is no monochromatic solution of $y=ax^c+b$ in such red/blue-colored $\mathbb{N}_{+}$. The result thus follows.
	
\begin{case}
$a$ and $b$ have different parity.
\end{case}
For the lower bound, we color all odd numbers red and all even numbers blue in $[a+b-1]$. Since $y=ax^c+b$ has no solution in $[a+b-1]$, it follows that there is no monochromatic solution of $y=ax^c+b$ in $[a+b-1]$. Under this coloring, there is no rainbow solution of $y=x+2$. Thus, $\operatorname{GR}_{2}\left(y=x+2:y=ax^c+b\right)\ge a+b$.
	
For the upper bound, we consider any red/blue-coloring of $[a+b]$. To the contrary, suppose that there is a red/blue-coloring of $[a+b]$ such that there is neither a rainbow solution of $y=x+2$ nor a monochromatic solution of $y=ax^c+b$. Since there is no rainbow solution of $y=x+2$, it follows from Corollary~\ref{Cor-Colored structure} that without considering the order of colors, the coloring is uniquely determined. Without loss of generality, we assume that all odd numbers red and all even numbers blue in $[a+b]$. Noticing that due to the different parity of $a$ and $b$, $a+b$ is odd. In this case, $(1, a+b)$ is a monochromatic solution of $y=ax^c+b$, which is a contradiction. The result thus follows.

\begin{case}
$a$ and $b$ are even.
\end{case}
For the lower bound, we color all odd numbers red and all even numbers blue in $[a\cdot 2^c+b-1]$. Noticing that $y=ax^c+b$ has only one solution in $[a\cdot 2^c+b-1]$, which is $(1, a+b)$. But since both $a$ and $b$ are even, $a+b$ is also even, that is, the solution $(1, a+b)$ is not a monochromatic solution. Also, under this coloring, there is no rainbow solution of $y=x+2$. Thus, $\operatorname{GR}_{2}\left(y=x+2:y=ax^c+b\right)\ge a\cdot 2^c+b$.
	
For the upper bound, we consider any red/blue-coloring of $[a\cdot 2^c+b]$. To the contrary, suppose that there is a red/blue-coloring of $[a\cdot 2^c+b]$ such that there is neither a rainbow solution of $y=x+2$ nor a monochromatic solution of $y=ax^c+b$. Since there is no rainbow solution of $y=x+2$, it follows from Corollary~\ref{Cor-Colored structure} that without considering the order of colors, the coloring is uniquely determined. Without loss of generality, we assume that all odd numbers red and all even numbers blue in $[a\cdot 2^c+b]$. Since both $a$ and $b$ are even, it follows that $a\cdot 2^c+b$ is also even. In this case, $(2, a\cdot 2^c+b)$ is a monochromatic solution of $y=ax^c+b$, which is a contradiction. The result thus follows.
\end{proof}

It follows from Corollary~\ref{Cor-Colored structure} that a $b$-colored $\mathbb{N}_{+}$ contains no rainbow solution of $y=x+b$ if and only if for each $\lambda\in [b]$, the $\lambda$-class $\mathcal{C}_{(y=x+b,\lambda)}\subseteq \mathbb{N}_{+}$ is monochromatic. For example, $\mathcal{C}_{(y=x+2,1)}$ is the set of all positive odd numbers, and $\mathcal{C}_{(y=x+2,2)}$ is the set of all positive even numbers. Since $\mathcal{C}_{(y=x+2,1)}$ and $\mathcal{C}_{(y=x+2,2)}$ are arithmetic sequences, it follows that $(x_0, ax_0^c+b)$ is a solution of $y=ax^c+b$ in $\mathcal{C}_{(y=x+2,1)}$ or $\mathcal{C}_{(y=x+2,2)}$ if and only if $\frac{ax_0^c+b-x_0}{2}$ is an integer. If $a$ and $b$ are odd, then $\frac{ax_0^c+b-x_0}{2}$ is not an integer for any $x_0\in\mathbb{N}_{+}$, which implies that there is no monochromatic solution of $y=ax^c+b$ in $\mathbb{N}_{+}$. If $a$ and $b$ have different parity, then $\frac{a\cdot 1^c+b-1}{2}$ is an integer, which implies that one of the monochromatic solution of $y=ax^c+b$ in $\mathbb{N}_{+}$ is $(1,a+b)$. If $a$ and $b$ are even, then $\frac{ax_0^c+b-x_0}{2}$ is not an integer for $x_0=1$ but is an integer for $x_0=2$, which implies that one of the monochromatic solution of $y=ax^c+b$ in $\mathbb{N}_{+}$ is $(2,a\cdot 2^c+b)$. Based on the above ideas, we provide the following result.

\begin{theorem}
For a strictly monotonically increasing binary function equation $y=f(x)$ such that $f(x)\in \mathbb{N}_+$ for all $x\in\mathbb{N}_+$, and integer $b\ge 2$, we have
	\begin{equation*}
		\operatorname{GR}_{b}\left(y=x+b:y=f(x)\right)=f(x_{\min}),
	\end{equation*}
	where $x_{\min}=\min\left\{x\in\mathbb{N}_{+}: \text{$\frac{f(x)-x}{b}$ is an integer}\right\}$.  Moreover, if $x_{\min}$ does not exist, then $\operatorname{GR}_{b}\left(y=x+b:y=f(x)\right)$ also does not exist.
\end{theorem}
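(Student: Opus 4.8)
The plan is to use Corollary \ref{Cor-Colored structure} to reduce everything to a single residue computation. In any exact $b$-coloring of $[n]$ (or of $\mathbb{N}_+$) that contains no rainbow solution of $y=x+b$, each class $\mathcal{C}_{(y=x+b,\lambda)}=\{\lambda+(i-1)b:i\in\mathbb{N}_+\}$ is monochromatic and distinct classes receive distinct colors; equivalently, the color of an integer $m$ depends only on $m\bmod b$. Under such a coloring a solution $(x_0,f(x_0))$ of $y=f(x)$ is monochromatic exactly when $x_0$ and $f(x_0)$ lie in the same class, i.e. when $x_0\equiv f(x_0)\pmod b$, which is precisely the requirement that $\frac{f(x_0)-x_0}{b}$ be an integer. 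Hence $x_{\min}$ is the least $x$ yielding a monochromatic solution of $y=f(x)$ inside the forced coloring, and the argument just tracks this one number, exactly as in the discussion preceding the theorem for $y=ax^c+b$.

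For the non-existence clause, suppose $x_{\min}$ does not exist. I would color each $m\in\mathbb{N}_+$ by the index $\lambda\in[b]$ of the class $\mathcal{C}_{(y=x+b,\lambda)}$ containing it; this is an exact $b$-coloring of $\mathbb{N}_+$ with no rainbow solution of $y=x+b$ by the $\mathbb{N}_+$-form of Corollary \ref{Cor-Colored structure}. Since no $x$ satisfies $x\equiv f(x)\pmod b$, the reduction above gives no monochromatic solution of $y=f(x)$, so Observation \ref{Obv-Gallai-Rado number} shows that $\operatorname{GR}_{b}(y=x+b:y=f(x))$ does not exist.

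Now assume $x_{\min}$ exists and set $N=f(x_{\min})$. For the lower bound I color $[N-1]$ by residue classes as above. Since $x_{\min}$ and $f(x_{\min})$ are distinct and congruent modulo $b$, they differ by at least $b$, so $N-1=f(x_{\min})-1\ge x_{\min}+b-1\ge b$, which forces all $b$ colors to appear and makes the coloring exact. This coloring has no rainbow solution of $y=x+b$, and any monochromatic solution $(x_0,f(x_0))\subseteq[N-1]$ would give $f(x_0)\le N-1<f(x_{\min})$, hence $x_0<x_{\min}$ by strict monotonicity, together with $x_0\equiv f(x_0)\pmod b$, contradicting the minimality of $x_{\min}$; thus $\operatorname{GR}_{b}(y=x+b:y=f(x))\ge N$.

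For the upper bound I fix $n\ge N$ and an arbitrary exact $b$-coloring of $[n]$. If it contains a rainbow solution of $y=x+b$ we are done; otherwise Corollary \ref{Cor-Colored structure} forces the residue coloring, and because $f(x_{\min})=N\le n$ both $x_{\min}$ and $f(x_{\min})$ lie in $[n]$ and share a color, producing a monochromatic solution of $y=f(x)$. This gives $\operatorname{GR}_{b}(y=x+b:y=f(x))\le N$, and with the lower bound the stated equality follows. The one genuine technical point is the exactness check in the lower bound, which is exactly why the gap estimate $f(x_{\min})-x_{\min}\ge b$ is required; all remaining steps are direct translations through the structural Corollary \ref{Cor-Colored structure}.
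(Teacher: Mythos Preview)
Your proposal is correct and follows essentially the same route as the paper's own proof: both reduce via Corollary~\ref{Cor-Colored structure} to the residue coloring, characterize monochromatic solutions by the congruence $f(x)\equiv x\pmod b$, and handle the non-existence clause, lower bound, and upper bound in the same way. Your version is in fact slightly more careful than the paper's, since you explicitly verify that the lower-bound coloring of $[N-1]$ is exact (via $f(x_{\min})-x_{\min}\ge b$) and you check the upper bound for every $n\ge N$ rather than only for $n=N$.
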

\begin{proof}
Let the $\lambda$-class $\mathcal{C}_{(y=x+b,\lambda)}=\{\lambda,\lambda+b,\lambda+2b,\ldots\}\subseteq \mathbb{N}_{+}$ for each $\lambda\in [b]$, and the coloring $\chi$ of $\mathbb{N}_{+}$ is as described in Corollary~\ref{Cor-Colored structure}. Specifically, the $\lambda$-class $\mathcal{C}_{(y=x+b,\lambda)}$ is monochromatic for each $\lambda\in [b]$, and for different $\lambda_i$ and $\lambda_j$ in $[b]$, $\mathcal{C}_{(y=x+b,\lambda_i)}$ and $\mathcal{C}_{(y=x+b,\lambda_j)}$ have different colors. Obviously, under the coloring $\chi$, $y=x+b$ does not have a rainbow solution in $\mathbb{N}_{+}$. Noticing that for each $\lambda\in [b]$, $\mathcal{C}_{(y=x+b,\lambda)}$ is an arithmetic sequence. Therefore, $(x_0, f(x_0))$ is a solution of $y=ax^c+b$ in $\mathcal{C}_{(y=x+b,\lambda)}$ for some $\lambda\in [b]$ if and only if $\frac{f(x_0)-x_0}{b}$ is an integer. Let $x_{\min}=\min\left\{x\in\mathbb{N}_{+}: \text{$\frac{f(x)-x}{b}$ is an integer}\right\}$. If $x_{\min}$ does not exist, then $\frac{f(x_0)-x_0}{b}$ is not an integer for any $x_0\in\mathbb{N}_{+}$, which implies that there is no monochromatic solution of $y=ax^c+b$ in $\mathbb{N}_{+}$, and thus $\operatorname{GR}_{b}\left(y=x+b:y=f(x)\right)$ does not exist.
	
Next, we assume that $x_{\min}$ exists. For the lower bound, we apply the coloring $\chi$ to $\mathcal{C}_{(y=x+b,\lambda)}\subseteq [f(x_{\min})-1]$. If there is a monochromatic solution of $y=f(x)$ in $[f(x_{\min})-1]$, then it contradicts the minimality of $x_{\min}$. Therefore, under the coloring $\chi$, there is neither a rainbow solution of $y=x+b$ nor a monochromatic solution of $y=f(x)$ in $[f(x_{\min})-1]$. For the upper bound, we consider any $b$-coloring of $[f(x_{\min})]$. To the contrary, suppose that there is a $b$-coloring of $[f(x_{\min})]$ such that there is neither a rainbow solution of $y=x+b$ nor a monochromatic solution of $y=f(x)$. Since there is no rainbow solution of $y=x+b$, it follows from Corollary~\ref{Cor-Colored structure} that without considering the order of colors, the coloring $\chi$ is uniquely determined. But under the coloring $\chi$, we can find a monochromatic solution of $y=f(x)$ in $[f(x_{\min})]$, which is $(x_{\min},f(x_{\min}))$, a contradiction. The result thus follows.
\end{proof}

\section{Conclusion}
When studying the Gallai--Rado number $\operatorname{GR}_{k}(\mathcal{E}_1:\mathcal{E}_2)$, it is important that there is no rainbow solution of the equation $\mathcal{E}_1$ in $[n]$ with a specific colored structure. The rainbow number of the equation $\mathcal{E}_1$ provides some valuable information. We can know with at least how many colors are needed to color $[n]$, and there always be a rainbow solution of the equation $\mathcal{E}_1$ in $[n]$. For example, it follows from $\operatorname{rb}([n],y=x+b)=b+1$ that for colored $[n]$ with $b+1$ colors, there always be a rainbow solution of the equation $y=x+b$ in $[n]$, while with $b$ colors, there may not be a rainbow solution of the equation $y=x+b$ in $[n]$. 

Noticing that the value of $\operatorname{rb}([n],y=x+b)$ dose not rely on $n$, so no matter how large $n$ is, as long as $[n]$ is colored with $b$ colors, its colored structure without rainbow solution of the equation $y=x+b$ is uniquely determined (see Corollary~\ref{Cor-Colored structure}). However, for the general linear binary equation $y=ax+b$, where $a\ge 2, b\ge 0$, the rainbow number $\operatorname{rb}([n],y=ax+b)$ rely on $n$, which leads to the fact that for different $n$, the maximum number of colors required to satisfy the equation $y=ax+b$ without rainbow solution in $[n]$ is also different. It can be seen that if $\operatorname{rb}([n],\mathcal{E})$ does not rely on $n$, then the study of the Gallai--Rado number involving rainbow $\mathcal{E}$ becomes much easier. A natural question is whether there exists a nonlinear binary function equation $y=f(x)$ whose rainbow number does not rely on $n$?

Since the number of colors of the Gallai--Rado numbers studied in this paper is fixed at $b$, where the rainbow equation is $y=x+b$, the following problems can be considered. When the number of colors $k$ satisfies $2\le k\le b-1$, for different equations $\mathcal{E}$, study the Gallai--Rado numbers $\operatorname{GR}_{k}(y=x+b:\mathcal{E})$. In fact, when $2\le k\le b-1$, the difficulty lies in the fact that the colored structure of $[n]$ that does not have rainbow solution of the equation $y=x+b$ is not uniquely determined. On the other hand, we can change the rainbow equation $y=x+b$ studied in this paper to study other rainbow equations, such as studying $\operatorname{GR}_{k}(y=ax:\mathcal{E})$ with a given number of colors $k$ and a positive integer $a\ge 2$.

\end{document}